\theoremstyle{plain}
\newtheorem{thm}{Theorem}[section]
\newtheorem{lem}[thm]{Lemma}
\newtheorem{prop}[thm]{Proposition}
\newtheorem{cor}[thm]{Corollary}
\theoremstyle{definition}
\theoremstyle{remark}
\newcommand{\xtrue}{x^\natural}
\newcommand{\norm}[1]{\left\Vert #1 \right\Vert}
\newcommand{\set}[1]{\left\{ #1 \right\}}
\newcommand{\abs}[1]{\left\vert #1 \right\vert}
\newcommand{\lmin}{\underline{\mu}}
\newcommand{\lmax}{\bar{\mu}}
\DeclareMathOperator*{\argmin}{arg\,min}
\newcommand{\iprod}[2]{\langle #1 , #2 \rangle}
\newcommand{\naf}{\nabla g}
\newcommand{\dA}{d_{\Phi}}
\newcommand{\dB}{d_{\Psi}}
\newcommand{\dX}{d_{\mathcal{C}}}
\newcommand{\dy}{d_{\eta}}
\title{Frank-Wolfe Works for Non-Lipschitz Continuous Gradient Objectives: Scalable Poisson Phase Retrieval}
\name{
\pbox{20cm}{Gergely~Odor\textsuperscript{*}, Yen-Huan~Li\textsuperscript{*}, Alp~Yurtsever\textsuperscript{*}, Ya-Ping~Hsieh\textsuperscript{*}, \\ Quoc~Tran-Dinh\textsuperscript{*\ \textdagger}, Marwa~El~Halabi\textsuperscript{*}, and Volkan~Cevher\textsuperscript{*}} \thanks{This work was supported in part by ERC Future Proof, SNF 200021-146750 and SNF CRSII2-147633.}}
\address{\textsuperscript{*}Laboratory for Information and Inference Systems\\
\'{E}cole Polytechnique F\'{e}d\'{e}rale de Lausanne, Switzerland \\[5pt]
\textsuperscript{\textdagger}Department of Statistics and Operations Research\\
The University of North Carolina at Chapel Hill, USA}
\begin{document}
\ninept
\maketitle
\begin{abstract}
We study a phase retrieval problem in the Poisson noise model. Motivated by the PhaseLift approach, we approximate the maximum-likelihood estimator by solving a convex program with a nuclear norm constraint. While the Frank-Wolfe algorithm, together with the Lanczos method, can efficiently deal with nuclear norm constraints, our objective function does not have a Lipschitz continuous gradient, and hence existing convergence guarantees for the Frank-Wolfe algorithm do not apply. In this paper, we show that the Frank-Wolfe algorithm works for the Poisson phase retrieval problem, and has a global convergence rate of $O ( 1/t )$, where $t$ is the iteration counter. We provide rigorous theoretical guarantee and illustrating numerical results.
\end{abstract}
\begin{keywords}
Phase retrieval, Poisson noise, PhaseLift, Frank-Wolfe algorithm, non-Lipschitz continuous gradient
\end{keywords}
\section{Introduction}

Phase retrieval is the problem of estimating a complex-valued signal from intensity measurements, which arises in many applications such as X-ray crystallography, diffraction imaging, astronomical imaging, and many others \cite{Shechtman2015}.

We focus on the Poisson noise case in this paper. Formally speaking, we are interested in estimating a signal $x^\natural \in \mathbb{C}^p$, given $a_1, \ldots, a_n \in \mathbb{C}^p$ and measurement outcomes $y_1, \ldots, y_n$, modeled as independent random variables following the Poisson distribution: 
\begin{equation}
\mathbb{P} \left\{ y_i = y \right\} = \frac{\exp \left( - \lambda_i \right) \lambda_i^y }{y!}, \quad y \in \{ 0 \} \cup \mathbb{N} \notag
\end{equation}
where $\lambda_i := \left\vert \left\langle a_i, x^\natural \right\rangle \right\vert^2$ for all $i$. In practice, each $y_i$ represents the number of photons detected by the sensor \cite{Fienup1982}.


The corresponding maximum-likelihood (ML) estimation yields a non-convex optimization problem which is difficult to solve.
A recent approach to circumvent this computational issue is PhaseLift \cite{Candes2015,Candes2013}. 
The PhaseLift approach casts the phase retrieval problem as a low rank matrix recovery problem, and then we can apply any convex optimization-based estimator, such as the basis pursuit like estimator \cite{Recht2010}, the nuclear-norm penalized estimator \cite{Candes2011b}, and the Lasso like estimator \cite{Davenport2014}.

Following the PhaseLift approach, we show in Section \ref{sec_formulation} that we can recover $\xtrue$ by solving
\begin{equation}
\hat{X} \in \argmin_{X} \left\{ f ( X ) : X \in \mathcal{X}  \right\} , \label{eq_opt}
\end{equation}
where
\begin{align}
f ( X ) &:=  \sum_{i = 1}^n \left\{ - y_i \log \left[ \mathrm{Tr} \left( A_i X \right) \right] + \mathrm{Tr} \left( A_i X \right) \right\}, \label{eq_f} \\
\mathcal{X} &:= \set{ X \geq 0, \ \norm{ X }_* \leq c, \ X \in \mathbb{C}^{p \times p} }. \label{eq_X}
\end{align}
for some $c > 0$, $A_i := a_i a_i^H$. A rule of thumb for choosing $c$ can be found in Section \ref{sec_constraint}.
We then find an eigenvector associated with the largest eigenvalue of $\hat{X}$ as our estimate of $x^\natural$.

It is easy to check that (\ref{eq_opt}) is a convex optimization problem. Existing convex optimization tools, however, are not directly applicable to solving (\ref{eq_opt}) due to two issues.

\begin{enumerate}
\item Most existing algorithms, such as \cite{Tran-Dinh2015b}, are computationally expensive for nuclear norm constraints, as they require computing the eigenvalue decomposition of a matrix in $\mathbb{C}^{p \times p}$ at each iteration.
\item While Frank-Wolfe-type algorithms can be relatively scalable for nuclear norm constraints \cite{Jaggi2013}, existing theoretical convergence guarantees for these Frank-Wolfe-type algorithms are not valid for our loss function in (\ref{eq_opt}).
\end{enumerate}

We will address the issues in detail in Section \ref{sec_issues}.

In this paper, we show that the standard Frank-Wolfe algorithm works for the optimization problem (\ref{eq_opt}), with a properly chosen parameter to be explicitly specified in Theorem \ref{thm_main}. Our theorem guarantees that the Frank-Wolfe algorithm converges at the rate $O ( 1 / t )$ globally, where $t$ is the iteration counter. 
Numerical experiments show that the empirical convergence rate can be even faster. 
The algorithm shares the same merit of the standard Frank-Wolfe algorithm, in the sense that it is scalable when dealing with a nuclear norm constraint. 

To the best of our knowledge, this is the first theoretical guarantee for the Frank-Wolfe algorithm applied to a non-H\"{o}lder (and hence non-Lipschitz) continuous gradient objective function.


\section{Poisson Phase Retrieval by Convex Optimization} \label{sec_formulation}

For the Poisson noise model, the ML estimator of $x^\natural$ is given by
\begin{equation}
\hat{x}_{\text{ML}} \in \arg \min_{x} \left\{ L ( x ) : x \in \mathbb{C}^p \right\} \label{eq_ml}
\end{equation}
where $L$ is the negative log-likelihood function (under a constant shift):
\begin{equation}
L ( x ) := \sum_{i = 1}^n \left[ - y_i \log \left( \left\vert \left\langle a_i, x \right\rangle \right\vert^2 \right) + \left\vert \left\langle a_i, x \right\rangle \right\vert^2 \right]. \notag
\end{equation}
The function $L$, unfortunately, is non-convex, and currently there does not exist a well-guaranteed algorithm for solving the optimization problem.

Motivated by the PhaseLift approach \cite{Candes2015,Candes2013}, we can reformulate the non-convex optimization problem (\ref{eq_ml}) as follows. Define $A_i := a_i a_i^H$ for all $i$, and $X^\natural := x^\natural ( x^\natural )^H$. Then we have 
\begin{align}
\left\vert \left\langle a_i, x^\natural \right\rangle \right\vert^2 &= \mathrm{Tr} \left( A_i X^\natural \right) \quad i = 1, \ldots, n \notag
\end{align}
where $\mathrm{Tr} \left( \cdot \right)$ denotes the trace function, and hence we can rewrite the original optimization problem as
\begin{equation}
\hat{x}_{\text{ML}} \in \arg \min_{x} \left\{ f ( X ) : X = x x^H, x \in \mathbb{C}^p \right\} \notag
\end{equation}
where $f$ is given in (\ref{eq_f}). This is equivalent to the optimization problem
\begin{equation}
\hat{X}_{\text{ML}} \in \arg \min_{X} \left\{ f ( X ) : X \geq 0, \mathrm{rank} ( X ) = 1, X \in \mathbb{C}^{p \times p} \right\}. \notag
\end{equation}
Note that given $\hat{X}_{\text{ML}}$, $\hat{x}_{\text{ML}}$ can be recovered via the relation $\hat{X}_{\text{ML}} = \hat{x}_{\text{ML}} \hat{x}_{\text{ML}}^H$.

As the variable $X$ is always of rank $1$, we can consider the convex relaxation given in (\ref{eq_opt}). We then find an eigenvector associated with the largest eigenvalue of $\hat{X}$ as our estimate of $x^\natural$.

It is easy to verify that (\ref{eq_opt}) is a convex optimization problem.

\section{A Rule of Thumb for Setting the Constraint} \label{sec_constraint}
In the convex optimization formulation (\ref{eq_opt}), we leave one parameter $c$ unspecified. The ideal setting should be $c = \norm{ X^\natural }_* = \norm{ \xtrue }_2^2$. While this setting may not be practically feasible, we need $c > \norm{ \xtrue }_2^2$ to ensure that $X^\natural$ is in the constraint set $\mathcal{X}$. 

The following theorem shows that choosing $c = ( 1 / n ) \sum_{i = 1}^n y_i$ suffices, if the sampling scheme satisfies an isometry property with high probability. 

\begin{prop}
Let $A \in \mathbb{C}^{n \times p}$, whose $i$-th row is given by $a_i^H$. Assume that there exists some $\varepsilon > 0$ such that 
\begin{equation}
( 1 - \varepsilon ) \norm{ \xtrue }_2^2 \leq \norm{ \frac{1}{\sqrt{n}} A \xtrue }_2^2 \leq ( 1 + \varepsilon ) \norm{ \xtrue }_2^2 \label{eq_isometry}
\end{equation}
with probability at least $1 - p_{\epsilon}$. Then we have, for any $t > 0$,
\begin{equation}
\bar{y} := \frac{1}{n} \sum_{i = 1}^n y_i > ( 1 + \varepsilon ) \norm{ \xtrue }_2^2 + t \notag
\end{equation}
with probability at least $1 - p_{\epsilon} - p_t$, where
\begin{equation}
p_t := \exp \left[ - \frac{n t}{4} \log \left( 1 + \frac{t}{2 ( 1 + \varepsilon ) \norm{ \xtrue }_2^2} \right) \right]. \notag
\end{equation}
\end{prop}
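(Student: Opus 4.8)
The plan is to exploit the fact that $\bar{y}$ is, up to the factor $1/n$, a single Poisson random variable, and then to combine a Chernoff tail estimate for that variable with the isometry hypothesis (\ref{eq_isometry}). First I would record the distributional structure: since the $y_i$ are independent and $y_i$ is Poisson with parameter $\lambda_i = \abs{\iprod{a_i}{\xtrue}}^2$, the sum $S := \sum_{i=1}^n y_i$ is itself Poisson with parameter $\Lambda := \sum_{i=1}^n \lambda_i = \norm{A\xtrue}_2^2$, and $\bar{y} = S/n$. Consequently $\mathbb{E}[\bar{y}] = \Lambda/n = \norm{(1/\sqrt{n})A\xtrue}_2^2$, which is exactly the quantity bracketed in (\ref{eq_isometry}). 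Thus, on the isometry event --- which occurs with probability at least $1 - p_{\epsilon}$ --- the mean of $\bar{y}$ is pinned to the interval $[(1-\varepsilon)\norm{\xtrue}_2^2,\,(1+\varepsilon)\norm{\xtrue}_2^2]$, so in particular the reference level $(1+\varepsilon)\norm{\xtrue}_2^2$ appearing in the claim is controlled directly by $\Lambda/n$.

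Next I would quantify the fluctuation of $S$ around $\Lambda$ by a Chernoff bound. Using the Poisson moment generating function $\mathbb{E}[e^{\theta S}] = \exp[\Lambda(e^{\theta}-1)]$ and optimizing over $\theta$, one obtains the classical rate-function estimate of the form $\exp[-\Lambda\,\psi(\cdot)]$ with $\psi(u) = (1+u)\log(1+u) - u$. The remaining work is to feed in the threshold determined by $(1+\varepsilon)\norm{\xtrue}_2^2 + t$ (legitimized by the isometry step) and to relax the sharp rate function $\psi$ to the cleaner lower bound that reproduces exactly the advertised exponent $\tfrac{nt}{4}\log\!\big(1 + \tfrac{t}{2(1+\varepsilon)\norm{\xtrue}_2^2}\big)$, and hence the factor $p_t$. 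This monotone simplification of $\psi$ is elementary calculus but must be carried out tightly so as to land on the stated closed form rather than a slightly different constant.

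Finally I would assemble the two pieces by a union bound over the isometry failure event (mass at most $p_{\epsilon}$) and the Poisson deviation event (mass at most $p_t$), so that off an event of probability at most $p_{\epsilon} + p_t$ the desired comparison between $\bar{y}$ and $(1+\varepsilon)\norm{\xtrue}_2^2 + t$ holds; this yields the stated probability at least $1 - p_{\epsilon} - p_t$. The main obstacle, and the step demanding the most care, is the concentration estimate: one must (i) align the direction of the Chernoff deviation with the side of the mean interval supplied by (\ref{eq_isometry}), so that the threshold $(1+\varepsilon)\norm{\xtrue}_2^2 + t$ is reached on the correct tail of $S$, and (ii) perform the $\psi$-simplification sharply enough to recover the exact exponent in $p_t$. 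By comparison, computing $\mathbb{E}[\bar{y}]$, invoking the isometry, and taking the union bound are routine.
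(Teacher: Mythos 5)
Your proposal is correct and follows essentially the same route as the paper: condition on the $a_i$ so that $n\bar{y}$ is Poisson with mean $\sum_i \lambda_i$, apply an upper-tail bound, use the isometry event to replace $\lambda := (1/n)\sum_i \lambda_i$ by the larger quantity $(1+\varepsilon)\norm{\xtrue}_2^2$ (which both shifts the threshold to the correct side and only weakens the exponent), and finish with a union bound over the two failure events. The only difference is that the paper simply cites the Poisson tail inequality in the ready-made form $\exp[-\frac{nt}{4}\log(1+\frac{t}{2\lambda})]$, whereas you re-derive it from the Chernoff rate function $\psi(u) = (1+u)\log(1+u)-u$; that simplification does go through, e.g.\ via $\psi(u) \ge \tfrac{u}{2}\log(1+u) \ge \tfrac{u}{4}\log\left(1+\tfrac{u}{2}\right)$.
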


If $\xtrue$ is sparse, then the isometry condition (\ref{eq_isometry}) can be implied by the restricted isometry property (RIP) of $A$ \cite{Candes2008,Foucart2013,Rudelson2008}. Even without sparsity, if $n$ is significantly larger than $p$, a matrix $A$ of independent and identically distributed (i.i.d.) subgaussian random variables can also satisfy (\ref{eq_isometry}) with high probability \cite{Foucart2013}.

While the isometry property of the Fourier measurement with a coded diffraction pattern is unclear currently, we show via numerical experiments in Section \ref{sec_experiment} that this rule of thumb works well on both synthetic and real-world data.

\section{Review of Convex Optimization Tools} \label{sec_issues}


We address why several existing convex optimization algorithms are not applicable to (\ref{eq_opt}) in this section.

We note that (\ref{eq_opt}) is a constrained convex minimization problem with a smooth loss function, and there are many well-known algorithms for solving such a problem. State-of-the-art choices for large-scale applications include the proximal gradient-type methods \cite{Auslender2006,Beck2009,Combettes2005,Nesterov2013,Nesterov2013a,Tran-Dinh2015b}, alternating direction method of multipliers (ADMM) \cite{Eckstein2015}, and Frank-Wolfe-type algorithms (a.k.a. conditional gradient methods) \cite{Freund2014,Garber2015,Harchaoui2014,Jaggi2013,Nesterov2015,Yurtsever2015b,Yurtsever2015}. There are also well-developed MATLAB packages available on the Internet \cite{Becker2011,Tran-Dinh2015b}. Those seemingly ready-to-use convex optimization tools, however, are not desirable for solving our problem (\ref{eq_opt}) for two issues. 

The first issue is scalability. When applied to the problem (\ref{eq_opt}), both proximal gradient-type methods and the ADMM require computing the \emph{prox-mapping} given by
\begin{equation}
\mathrm{prox} ( X ) := \arg \min_{S} \set{ \omega ( S - X ) : S \in \mathcal{X} } \notag
\end{equation}
for a given strongly convex ``distance generating function'' (DGF) $\omega$. A standard choice of DGF for matrix variables is $\omega ( X ) := ( 1/2 ) \norm{ X }_F^2$, where $\norm{ \cdot }_F$ denotes the Frobenius norm. For a positive semi-definite matrix $X \in \mathbb{C}^{p \times p}$, whose eigenvalue decomposition is $X = U \mathrm{diag} ( v ) U^H$, we have $\mathrm{prox} ( X ) = U \mathrm{diag} ( \tilde{v} ) U^H$, where $\tilde{v }$ is the Euclidean projection of $v$ onto the standard simplex in $\mathbb{R}^p$ scaled by $c$. While the prox-mapping is simple to describe, the eigenvalue decomposition renders the algorithm slow when the parameter dimension $p$ is large, as its computational complexity is in general $O ( p^3 )$. Similar issues exist when we choose other DGFs.

Scalability is a major reason why Frank-Wolfe-type algorithms have been attracting attention in recent years. We summarize the standard Frank-Wolfe algorithm (when applied to (\ref{eq_opt})) in Algorithm \ref{alg_fw}, where $( \tau_t )_{t = 1}^T$ is a sequence of real numbers in the interval $( 0, 1 ]$ to be specified. 

Here we have a slight abuse of notations. When applied to our specific problem (\ref{eq_opt}), the variables $x_0, \ldots, x_t$ and $\nabla f ( x_t )$ should be understood as their matrix counterparts $X_0, \ldots, X_t$ and $\nabla f ( X_t )$, respectively. 

\begin{algorithm} 
\caption{(The standard Frank-Wolfe algorithm)}
\label{alg_fw}
\begin{algorithmic}
\State Choose an arbitrary $x_0 \in \mathcal{X}$
\For{$t = 0, \ldots, T$}
\State Compute $v_t \in \argmin_{s} \set{ \left\langle s, \nabla f ( x_t ) \right\rangle:  s \in \mathcal{X} }$
\State Update $x_{t+1} = ( 1 - \tau_t ) x_t + \tau_t v_t$
\EndFor
\end{algorithmic}
\end{algorithm}

The only computational bottleneck is in computing $v_t$ (or its matrix counterpart $V_t$). For the specific constraint set $\mathcal{X}$ given in (\ref{eq_X}) and any positive semi-definite matrix $X_t$, it can be easily verified that $V_t$ is a scaled rank-one approximation of $\nabla f ( X_t )$, and hence can be efficiently computed by the Lanczos method \cite{Jaggi2013}. More precisely, let $u_t \in \mathbb{C}^p$ be an eigenvector of $\nabla f ( X_t )$ associated with the largest eigenvalue. We have $V_t = c ( u_t u_t^H )$.

Unfortunately, the second issue arises: none of the existing theoretical convergence guarantees for Frank-Wolfe-type algorithms, to the best of our knowledge, is valid for the specific loss function (\ref{eq_f}). The result in \cite{Jaggi2013} requires a bounded curvature condition; \cite{Freund2014,Garber2015,Harchaoui2014} require the gradient of the objective function to be Lipschitz continuous; \cite{Nesterov2015} requires a weaker condition that the gradient is H\"{o}lder continuous; the Frank-Wolfe like algorithm in \cite{Yurtsever2015b,Yurtsever2015} requires the gradient of the  conjugate of the objective function to be H\"{o}lder continuous. 
All of the conditions mentioned above implicitly presumes $\mathcal{X} \subseteq \mathrm{dom} ( f )$, but this is not the case for \eqref{eq_opt}, since $0 \in \mathcal{X}$ but $0 \notin \mathrm{dom} ( f )$.


The second issue also exists for proximal gradient-type methods and the ADMM, as \cite{Auslender2006,Beck2009,Combettes2005,Eckstein2015,Nesterov2013,Nesterov2013a} also require the Lipschitz continuity of the gradient. The only exception is the composite self-concordant minimization algorithms proposed in \cite{Tran-Dinh2015b}---the logarithmic function is a typical example of self-concordant functions.

There are some works on noiseless phase retrieval by non-convex optimization techniques \cite{Candes2015,Netrapalli2013}, and provide theoretical convergence guarantees. The convergence guarantees do not extend to the Poisson noise case.

\section{Convergence Guarantee}

In this section, we provide convergence guarantee of the standard Frank-Wolfe method in Algorithm \ref{alg_fw} for the prototype constrained convex optimization optimization problem: 
\begin{equation}
g^\star := \min_{X \in \mathcal{C}} \set{ g ( X ) : X \in \mathcal{C} } \label{eq_prototype}
\end{equation}
where $\mathcal{C}$ is a nuclear norm ball in $\mathbb{R}^{p \times p}$, and 
\begin{equation}
g ( X ) := \mathrm{Tr} ( \Psi X ) - \sum_{i = 1}^n \eta_i \log \mathrm{Tr} ( \Phi_i X ) \label{eq_g} 
\end{equation}
for some $\Psi \in \mathbb{R}^{p \times p}$, non-negative integers $\eta_1, \ldots, \eta_n$, and positive semi-definite matrices $\Phi_1, \ldots, \Phi_n \in \mathbb{R}^p$.


We start with some definitions. Let $\norm{ \cdot }$ be the spectral norm on $\mathbb{R}^{p \times p}$, and $\norm{\cdot}_*$ be the nuclear norm.
Define $d_{\mathcal{C}}$ as the diameter of $\mathcal{C}$, i.e., 
\begin{equation}
d_{\mathcal{C}} := \max_{ X, Y } \set{ \norm{ X - Y } : X , Y \in \mathcal{C} }. \notag
\end{equation}
Let $\dA := \max_i \norm{\Phi_i}$ and $\dB:= \norm{ \Psi }$. Furthermore, we define
\begin{align}
\lmax &:= \max_{i,x} \set{ \mathrm{Tr} ( \Phi_i X ) : 1 \leq i \leq n, X \in \mathcal{C} } \notag \\
\lmin &:= \min_i \set{ \mathrm{Tr} ( \Phi_i X_0 ) : 1 \leq i \leq n }. \notag
\end{align}
Notice that we need to choose $X_0$ such that $\lmin > 0$, due to the presence of logarithmic functions in $g$.

Our main theoretical result is the following theorem:

\begin{thm} \label{thm_main}
Consider the optimization problem (\ref{eq_prototype}). The iterates $( X_t )_{t \geq 0}$ given by Algorithm \ref{alg_fw} with 
\begin{equation}
\tau_t := \frac{2}{t + 3} \notag
\end{equation}
satisfies
\begin{equation}
g ( X_t ) - g^\star < \frac{8 \gamma^2 \dA^2 \dX^2 }{t + 2} + \frac{ 2 \dX \norm{ \naf ( X_0 ) } }{ \lmin ( t + 1 ) ( t + 2 ) } \notag
\end{equation}
The quantity $\gamma := \max \set{ \gamma_1, \gamma_2, \gamma_3 }$ is a constant independent of $t$, where
\begin{align}
\gamma_1 & := \frac{ 2 \dB \dX }{ \lmin }, \quad \gamma_2 := 2 \frac{n \dy}{\lmin} \left( \frac{ 4 n \lmax \dy }{\lmin} + 1 \right)^2, \notag \\
\gamma_3 & := \frac{ 64 n^2 \lmax^2 \dy^2 }{ \lmin^3 } \left( \frac{ 4 n \lmax \dy }{ \lmin } + 1 \right). \notag
\end{align}

Consequently, we have $g ( X_t ) - g^\star = O ( 1 / t )$.
\end{thm}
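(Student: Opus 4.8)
\emph{Proof strategy.} The plan is to run the classical Frank--Wolfe descent argument, but to control the curvature \emph{along the iterate path} rather than uniformly over $\mathcal{C}$; the latter is hopeless here, since the Hessian of $g$ blows up as $\mathrm{Tr}(\Phi_i X) \to 0$ on the boundary of $\mathrm{dom}(g)$. I first record that $g$ is twice differentiable on its domain, with $\naf(X) = \Psi - \sum_{i=1}^n \frac{\eta_i}{\mathrm{Tr}(\Phi_i X)}\Phi_i$ and
\[
D^2 g(X)[H,H] = \sum_{i=1}^n \eta_i \frac{[\mathrm{Tr}(\Phi_i H)]^2}{[\mathrm{Tr}(\Phi_i X)]^2} \geq 0 .
\]
A second-order Taylor expansion of $g$ along the segment from $X_t$ to $X_{t+1} = (1-\tau_t)X_t + \tau_t V_t$, combined with convexity and the optimality of $V_t$ for the linearized subproblem (which give $\iprod{\naf(X_t)}{X_t - V_t} \geq g(X_t) - g^\star$), yields the one-step recursion
\[
g(X_{t+1}) - g^\star \leq (1-\tau_t)\bigl(g(X_t) - g^\star\bigr) + \frac{\tau_t^2}{2}\,C_t, \quad C_t := D^2 g(\xi_t)[V_t - X_t,\, V_t - X_t]
\]
for some $\xi_t \in [X_t, X_{t+1}]$. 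This is the standard template, and the whole difficulty is to show that $C_t$ is bounded by a constant independent of $t$.

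For the numerator of $C_t$ I would use the crude estimate $\abs{\mathrm{Tr}(\Phi_i(V_t - X_t))} \leq \norm{\Phi_i}\,\norm{V_t - X_t}_* \leq \dA\,\dX$, so that $C_t \leq \dA^2\dX^2 \sum_{i=1}^n \eta_i / [\mathrm{Tr}(\Phi_i \xi_t)]^2$. Since $\Phi_i \succeq 0$ and $V_t \succeq 0$, each map $X \mapsto \mathrm{Tr}(\Phi_i X)$ is a nonnegative linear functional, whence $\mathrm{Tr}(\Phi_i X_{t+1}) \geq (1-\tau_t)\mathrm{Tr}(\Phi_i X_t)$ and therefore $\mathrm{Tr}(\Phi_i \xi_t) \geq (1-\tau_t)\mathrm{Tr}(\Phi_i X_t) \geq \tfrac13\mathrm{Tr}(\Phi_i X_t)$ uniformly in $t$ (as $\tau_t \leq \tfrac23$). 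Thus the entire problem reduces to a \emph{uniform-in-$t$ lower bound} $\mathrm{Tr}(\Phi_i X_t) \geq \beta > 0$: given such a $\beta$, one gets $\sum_i \eta_i / [\mathrm{Tr}(\Phi_i\xi_t)]^2 \leq 9 n \dy / \beta^2$, which is exactly the kind of constant encoded in $\gamma$ (through $\gamma_2,\gamma_3$ and the composite $4 n \lmax \dy/\lmin$), giving $C_t \leq 4\gamma^2\dA^2\dX^2$.

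The crux --- and the step I expect to be hardest --- is establishing the trace lower bound, which should follow from the \emph{self-correcting} nature of the linear-minimization oracle. Writing $s_i := \mathrm{Tr}(\Phi_i X_t)$, the vertex is $V_t = c\,u_t u_t^H$ with $u_t$ the top eigenvector of $-\naf(X_t) = -\Psi + \sum_i (\eta_i/s_i)\Phi_i$. When some $s_j$ threatens to become small, the coefficient $\eta_j/s_j$ is large, so $-\naf(X_t) \succeq (\eta_j/s_j)\Phi_j - \dB I$ is dominated by $\Phi_j$; a gap/eigenvector-overlap estimate then forces $\mathrm{Tr}(\Phi_j V_t) = c\,u_t^H \Phi_j u_t$ to be bounded below, so that the update $s_j(t+1) = (1-\tau_t)s_j(t) + \tau_t\,\mathrm{Tr}(\Phi_j V_t)$ cannot let $s_j$ collapse. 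I would make this rigorous by induction on $t$ (base case $s_i(0) \geq \lmin$): indices bounded away from the threshold are safe because $s_i(t+1) \geq (1-\tau_t)s_i(t)$, while near-active indices are rescued by the overlap estimate. The main obstacle is the \emph{coupling} that arises when several constraints are simultaneously near the threshold, so that $-\naf$ is dominated by a sum of several $\Phi_i$ and no single-eigenvector argument suffices; I expect this is handled by tracking a scalar potential such as $\min_i s_i(t)$ or $\sum_i \eta_i/s_i(t)$ and showing the oracle keeps it controlled --- and that the constants $\gamma_2,\gamma_3$ are precisely what falls out of closing this induction.

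With the uniform bound $C_t \leq C := 4\gamma^2\dA^2\dX^2$ in hand, I would finish by unrolling the recursion. Using $\tau_t = 2/(t+3)$ and the telescoping identity $\prod_{s=0}^{t-1}(1-\tau_s) = \tfrac{2}{(t+1)(t+2)}$, the homogeneous part contributes the $O(1/t^2)$ initialization term $\tfrac{2\dX\,\norm{\naf(X_0)}}{\lmin\,(t+1)(t+2)}$ (after bounding the initial suboptimality via $g(X_0)-g^\star \leq \iprod{\naf(X_0)}{X_0 - X^\star}$ for an optimal $X^\star$), while the accumulated curvature contributes $\tfrac{2C}{t+2} = \tfrac{8\gamma^2\dA^2\dX^2}{t+2}$, which together give the stated bound and hence $g(X_t) - g^\star = O(1/t)$.
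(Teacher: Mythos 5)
You have correctly located the crux---controlling $C_t := \sum_{i=1}^n \eta_i/\mathrm{Tr}(\Phi_i X_t)$ along the iterate path, which is precisely the quantity the paper's key lemma bounds---but your proposal leaves exactly this step unproved, and the mechanism you sketch for it does not work. First, your ``safe indices'' induction cannot close: the only protection you give an index not rescued by the oracle is $s_i(t+1) \geq (1-\tau_t)\, s_i(t)$, and since $\prod_{s=0}^{t-1}(1-\tau_s) = \frac{2}{(t+1)(t+2)} \to 0$, any index that goes unrescued over long stretches decays to zero; no fixed threshold $\beta$ survives such an induction. Second, the eigenvector-overlap argument is unsound even in principle: the top eigenvector $u_t$ of $-\naf(X_t) = -\Psi + \sum_i (\eta_i/s_i)\Phi_i$ can lie in (or near) the kernel of a particular near-threshold $\Phi_j$ whenever another coefficient $\eta_{j'}/s_{j'}$ is comparably large with a non-commuting $\Phi_{j'}$---this is exactly the coupling you flag yourself and then defer with ``I expect this is handled by\dots''. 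Since the whole theorem hinges on this lemma, what you have is a plan with its essential ingredient missing, not a proof.

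For comparison, the paper closes this gap with no spectral reasoning at all. It compares the oracle value at $V_t$ against the fixed feasible point $X_0$: $\iprod{V_t}{\naf(X_t)} \leq \iprod{X_0}{\naf(X_t)}$ yields $\sum_i \eta_i \mathrm{Tr}(\Phi_i V_t)/\mathrm{Tr}(\Phi_i X_t) \geq \lmin C_t - \dB\dX \geq \lmin C_t/2$ once $C_t > 2\dB\dX/\lmin$, and a pigeonhole argument (Proposition 7.5) extracts a \emph{single} index $k$ with both $1/s_k \geq \lmin C_t/(4 n \lmax \dy)$ and $\mathrm{Tr}(\Phi_k V_t) \geq \lmin/4$. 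One rescued term, inserted into the refined recursion $C_{t+1} \leq C_t/(1-\tau_t) - \xi_k$, forces $C_{t+1} \leq C_t$ whenever $C_t \geq \kappa_t$ and $\tau_t < \lmin/(4 n \lmax \dy)$; the pre-asymptotic phase $t \leq T \approx 8 n (\lmax/\lmin)\dy$ is absorbed by the crude product bound $C_t \leq C_0 \prod_{i \leq T}(1-\tau_i)^{-1}$, which is exactly where $\gamma_2$ comes from, while $\gamma_3$ is the stationary threshold (Proposition 7.6). Bounding the aggregate $C_t$ rather than $\min_i s_i$ is what makes one rescued index suffice---your potential-function instinct was right, but it must be the sum, driven by the variational inequality, not a per-index threshold. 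Your outer loop (a Taylor/descent recursion instead of the paper's Nesterov-style estimate sequence with the gradient-difference bound $\norm{\naf(X_{t+1}) - \naf(X_t)} \leq \frac{\alpha_{t+1}}{S_t}\dA^2\dX C^2$) would be a fine alternative once the lemma is available, with two small corrections: your curvature constant should be $9\gamma^2\dA^2\dX^2$ rather than $4\gamma^2\dA^2\dX^2$ given your factor $(1-\tau_t)^{-2} \leq 9$ (a harmless constant), and the $\lmin$ you insert into the initialization term is unjustified by your own derivation (it appears in the theorem statement but not in the paper's Proposition 7.4 either).
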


Theorem \ref{thm_main} establishes the validity of using the standard Frank-Wolfe algorithm to solve (\ref{eq_opt}). 
We note that this theorem is a \emph{worst case} guarantee for all loss functions of the form \eqref{eq_g}.
As we will see in the next section, empirically, both the constant and the convergence rate can be much better.

Our choice of $\tau_t$ is slightly different from the standard one in \cite{Jaggi2013,Nesterov2015}, where $\tau_t := 2 / ( t + 2 )$. 
This is due of technical concerns in the proof.

As a short sketch, the key idea is to show the boundedness of $\norm{ \nabla g ( X_{t+1} ) - \nabla g ( X_t ) }$ for all $t$, where $\norm{ \cdot }$ denotes the spectral norm. 
This bound, by the framework in \cite{Nesterov2015}, is sufficient to establish the convergence guarantee. 
This is simple if the gradient is H\"{o}lder continuous, since then
\begin{equation}
\norm{ \nabla g ( X_{t+1} ) - \nabla g ( X_t ) } \leq L_{\nu} \norm{ X_{t+1} - X_t }_*^\nu \leq L_{\nu} \dX^\nu \notag
\end{equation}
for some $\nu \in ( 0, 1]$ and $L_\nu > 0$. 
For the optimization problem (\ref{eq_opt}) we consider, this issue can be reduced to the boundedness of $$C_t := \sum_{i = 1}^n \frac{\eta_i}{\mathrm{Tr} ( \Phi_i X_t )}$$ for all $t$. 
We complete the proof by showing that $C_t$ is bounded above by a constant for all $t$, if we choose $\tau_t = 2 / ( t + 3 )$.


\section{Numerical Results} \label{sec_experiment}
In this section, we present numerical evidence to assess the convergence behaviour and the scalability of the proposed Frank-Wolfe algorithm. 

 \begin{figure*}[!ht]
 \begin{center}
\includegraphics[width=\textwidth]{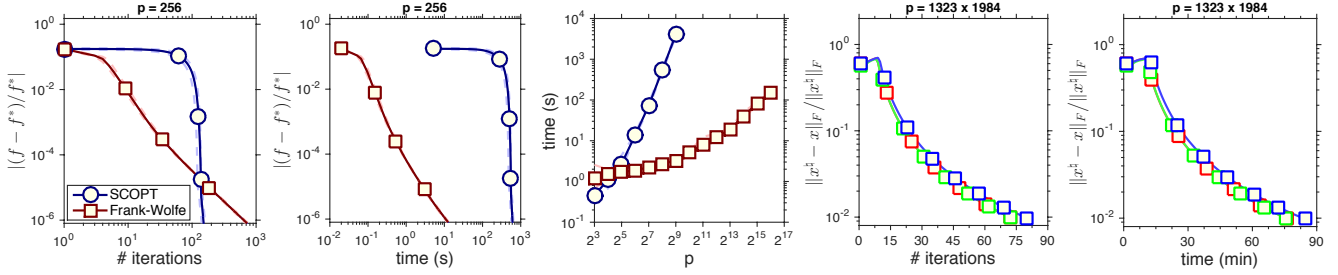} 
\caption{Convergence behaviour of the algorithms for different data sizes: The three plots on the left correspond to the first experiment. Solid lines show the average performance over $10$ random trials, and the two dashed lines show the best and the worst performances, respectively. The two plots on the right correspond to the second experiment. Each color (blue, green, red) represents one color channel.} \label{fig:fig1}
\end{center}
\end{figure*} 

Our numerical experiment is based on coded diffraction pattern measurements with the octonary modulation, which were considered in \cite{ Candes2015b, Yurtsever2015b} for the noiseless model. 
A similar setup was also considered also in \cite{Candes2015a} for the Poisson noise model. 

In \cite{Candes2015a}, the MATLAB package TFOCS \cite{Becker2011} was used to solve a convex optimization problem similar to (\ref{eq_opt}). 
The algorithm, however, is not guaranteed to converge for the problem under our consideration (cf. Section \ref{sec_issues}). 
Therefore, we compare the Frank-Wolfe algorithm with the proximal gradient method in the Self-Concordant OPTimization toolbox (SCOPT) \cite{Tran-Dinh2015b}. Recall that our loss function is self-concordant, and hence the algorithms in \cite{Tran-Dinh2015b} are applicable.


In our first experiment, we consider the random Gaussian signal model: 
We generate a random complex Gaussian vector $x^\natural \in \mathbb{C}^p$ with i.i.d. entries, where the real and the imaginary parts of the each entry of $x^\natural$ are independent and sampled from the standard Gaussian distribution. 

We run both algorithms starting from the same Gaussian initial iterate, sampled from the same distribution as ${x}^\natural$.
We keep track of the objective value and the elapsed time over the iterations, and compute the approximate relative objective residual ($|f - f^\ast | / |f^\ast |$) as the performance measure, where the actual optimum value $f^\star$ is approximated by $f^\ast$, the minimum objective value obtained by running $200$ iterations of the SCOPT and/or $10000$ iterations of the Frank-Wolfe algorithm.


In the second experiment, we test the scalability of the Frank-Wolfe approach, by recovering a real image as in \cite{ Candes2015b, Yurtsever2015b}. 
We choose the EPFL campus image of size $1323 \times 1984$ as the signal to be measured, which corresponds to a signal dimension $p = 2624832$.
We apply the Frank-Wolfe algorithm to recover three color channels separately, and stop the algorithm when $10^{-2}$ recovery error ($\| x - x^\natural\|_F / \| x^\natural \|_F$) is reached. 

In both experiments, we set the constraint parameter $c$ to the mean of the measurements, following the rule of thumb in Section \ref{sec_constraint}, and we set the number of different modulating waveforms $L$ to $20$. 

 \begin{figure}[h]
 \begin{center}
\includegraphics[width=.95\linewidth]{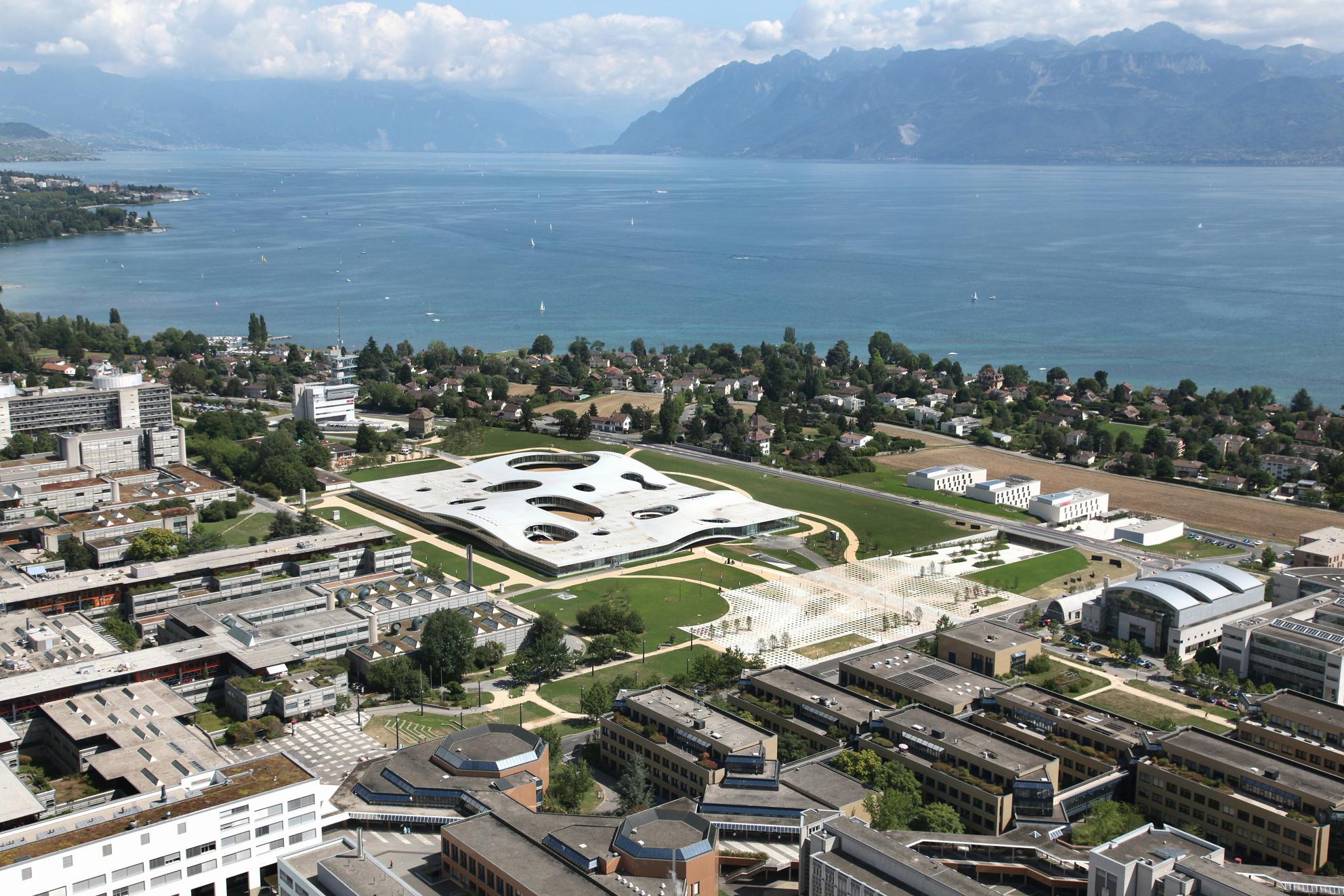} 
\caption{An EPFL image of size $1323 \times 1984$, reconstructed by 75 iterations of the Frank-Wolfe algorithm: PSNR = 44.92 dB.} \label{fig:fig2}
\end{center}
\end{figure} 
We implement Algorithm \ref{alg_fw} in MATLAB and use the built-in \texttt{eigs} function, which is based on the Lanczos algorithm, with $10^{-3}$ relative error tolerance, to perform the minimization step of the Frank-Wolfe algorithm. 
In the weighting step, we adapt the efficient thin singular value decomposition updating method of \cite{Brand2006} under low rank modifications, as explained in \cite{Yurtsever2015b}, in order to tame the memory growth. 

We time our experiments on a computer cluster, and restricting the computational resource to 8 CPU of 2.40 GHz and 32 GB of memory space per simulation. 

Figure \ref{fig:fig1} illustrates the convergence behaviour of the algorithms for different data sizes. 

The first three plots on the left correspond to the first experiment. 
Solid lines show the average performance over $10$ random trials, and the two dashed lines show the best and the worst instances, respectively. 
In the first two plots, we observe that the 
empirical rate of convergence is about $O(t^{-1.89})$, which is better than the theoretically guaranteed rate $O(t^{-1})$. 
In the third plot, we show the time required to reach a predefined accuracy level of $10^{-5}$ in terms of the relative objective residual, for different data sizes.

The last two plots of Figure \ref{fig:fig1} correspond to the second experiment, which also provides an empirical evidence for the estimation quality using the constraint parameter $c$. Each color (blue, green, red) represents one color channel.

Finally, Figure \ref{fig:fig2} shows the estimate $x_t$, after 75 iterations of the Frank-Wolfe method. The PSNR of the reconstructed image is 44.92dB.

Notice that, considering the lifted dimensions $p^2$ in the second experiment, even the generation of a simple iterate $X_t$ would require approximately $7$ TB of memory space, for a single color channel, when using the prox-mapping-based solver in SCOPT. 
By avoiding the computation of the prox-mapping, and adapting the efficient low rank updates, the Frank-Wolfe algorithm keeps a low memory footprint, and hence is more scalable compared to the self-concordant optimization method in SCOPT.

%


\section{Discussion}

While we focus on the Poisson phase retrieval problem in this paper, our main contribution is in verifying the validity of applying the standard Frank-Wolfe algorithm to optimization problems of the form (\ref{eq_opt}). Therefore, the application of our result is not restricted to Poisson phase retrieval. One interesting application is ML estimation for quantum state tomography \cite{Hradil1997}, where the parameter dimension grows exponentially fast with the number of qubits, and the physical model naturally imposes a nuclear norm constraint.

\section{Proofs}

\subsection{Proof of Proposition 3.1}

Notice that, conditioning on $a_1, \ldots, a_n$, $n \bar{y}$ is a Poisson random variable with mean $\sum_{i = 1}^n \lambda_i$.
By the tail bound for Poisson random variables \cite{Bobkov1998,Kontoyiannis2006}, conditioning on $a_1, \ldots, a_n$, we have for any $t > 0$, 
\begin{equation}
\mathbb{P} \set{ \bar{y} - \mathbb{E}\, \bar{y} > t } \leq \exp \left[ - \frac{ n t }{ 4 } \log \left( 1 + \frac{ t }{ 2 \lambda } \right) \right], \notag
\end{equation}
where $\lambda := ( 1 / n ) \sum_{i = 1}^n \lambda_i$.

Recall that $\lambda_i := \abs{ \iprod{ a_i }{ \xtrue } }^2$. 
By the assumption on $A$, we have $( 1 - \delta ) \norm{ x }_2^2 \leq \lambda \leq ( 1 + \delta ) \norm{ x }_2^2$ with probability at least $1 - p_{\delta}$.
Moreover, on this event, we have
\begin{align}
& \mathbb{P} \set{ \bar{y} - ( 1 + \delta ) \norm{ \xtrue }_2^2 > t } \notag \\
& \quad \leq \mathbb{P} \set{ \bar{y} - \lambda > t } \notag \\
& \quad \leq \exp \left[ - \frac{ n t }{ 4 } \log \left( 1 + \frac{ t }{ 2 ( 1 + \delta ) \norm{ \xtrue }_2^2 } \right) \right]. \notag
\end{align}
This proves the theorem.

\subsection{Proof of Theorem 5.1}


Let $( \alpha_t )_{t \geq 0}$, $\alpha_0 \neq 0$ be a sequence of non-negative real numbers. 
We consider step sizes of the form 
\begin{equation}
\tau_t = \alpha_{t+1} / S_{t+1}, \label{eq_general_tau}
\end{equation}
where $S_{t} := \sum_{k = 0}^t \alpha_t$. 
Unless otherwise stated, $( X_t )_{t \geq 0}$ refers to the sequence of iterates generated by Algorithm \ref{alg_fw}, with the step size chosen as in \eqref{eq_general_tau}. Notice that then the convergence rate of the algorithm can depend on the sequence $( \alpha_t )_{t \geq 0}$.

By the convexity of $\mathcal{C}$, it is obvious that $X_t \in \mathcal{C}$ for all $t$. 
Due to the presence of the logarithmic function, we also need to verify that $X_t \in \mathrm{dom} ( g )$ for all $t$.

\begin{prop} \label{PROP_FEASIBILITY}
The following hold.
\begin{enumerate}
\item $\mathrm{Tr} ( \Phi_i V_t ) \geq 0$ for all $i$ and $t$.
\item If $\mathrm{Tr} ( \Phi_i X_0 ) > 0$, then $\mathrm{Tr} ( \Phi_i X_t ) > 0$ for all $i$ and $t$.
\end{enumerate}
\end{prop}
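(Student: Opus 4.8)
The plan is to derive both claims directly from the geometry of the Frank-Wolfe vertices $V_t$ together with the positive semidefiniteness of the matrices $\Phi_i$; no estimates on the objective are needed here.

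For the first claim I would invoke the explicit form of the linear minimization oracle over the constraint set. As already noted in the text, for the nuclear norm ball each vertex is a scaled rank-one matrix $V_t = c\, u_t u_t^H$, where $u_t$ is an eigenvector of $\naf(X_t)$ associated with the extreme eigenvalue; in particular $V_t$ is positive semidefinite. Since each $\Phi_i$ is positive semidefinite as well, I would conclude $\mathrm{Tr}(\Phi_i V_t) = c\, u_t^H \Phi_i u_t \geq 0$. Equivalently, one may use the general fact that the trace of a product of two positive semidefinite matrices is nonnegative (write $\Phi_i = \Phi_i^{1/2}\Phi_i^{1/2}$ and apply cyclicity of the trace). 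Either route gives part 1 with no case analysis.

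For the second claim I would argue by induction on $t$. The base case $t=0$ is exactly the hypothesis $\mathrm{Tr}(\Phi_i X_0) > 0$. For the inductive step, the update rule $X_{t+1} = (1-\tau_t) X_t + \tau_t V_t$ and linearity of the trace give
\begin{equation}
\mathrm{Tr}(\Phi_i X_{t+1}) = (1-\tau_t)\,\mathrm{Tr}(\Phi_i X_t) + \tau_t\,\mathrm{Tr}(\Phi_i V_t). \notag
\end{equation}
By part 1 the second summand is nonnegative, and the inductive hypothesis gives $\mathrm{Tr}(\Phi_i X_t) > 0$, so it remains only to guarantee that the first summand is strictly positive, i.e. that $1 - \tau_t > 0$. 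This is where the step-size form matters: since $\tau_t = \alpha_{t+1}/S_{t+1}$ with $S_{t+1} = \alpha_0 + \cdots + \alpha_{t+1}$ and $\alpha_0 \neq 0$, we have $S_{t+1} > \alpha_{t+1}$ and hence $\tau_t < 1$. Therefore $(1-\tau_t)\,\mathrm{Tr}(\Phi_i X_t) > 0$, and strict positivity propagates from $t$ to $t+1$.

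The only delicate point — and thus the ``main obstacle,'' though a mild one — is preserving the \emph{strict} inequality through the induction. Were $\tau_t$ allowed to equal $1$ at some step, the $X_t$ contribution would vanish and one could only guarantee $\mathrm{Tr}(\Phi_i X_{t+1}) \geq 0$, breaking the chain and potentially leaving $X_{t+1} \notin \mathrm{dom}(g)$. The strict bound $\tau_t < 1$, which follows from $\alpha_0 \neq 0$, is exactly what rules this out; I would state this dependence explicitly so that it transfers to the concrete choice $\tau_t = 2/(t+3)$ used in Theorem \ref{thm_main}.
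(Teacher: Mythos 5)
Your proof is correct and follows essentially the same route as the paper's: positive semidefiniteness of the rank-one vertex $V_t$ together with that of $\Phi_i$ gives part 1, and part 2 is the same induction using linearity of the trace and the bound $\tau_t < 1$. Your only addition is spelling out explicitly why $\alpha_0 \neq 0$ forces $\tau_t = \alpha_{t+1}/S_{t+1} < 1$, a detail the paper states without elaboration.
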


\begin{proof}
See Section \ref{sec_feasibility}.
\end{proof}

Now we show the boundedness of $C_t$ for all $t$, as stated in Section 5. 
Recall that $C_t := \sum_{i = 1}^n ( \eta_i / \mathrm{Tr} ( \Phi_i X_t ) )$.

\begin{lem} \label{LEM_KEY}
For any $T$ such that $1 - 4 n ( \lmax / \lmin ) \dy \tau_T > 0$, we have $C_t \leq C$, where $C$ is a constant independent of $t$ defined as
\begin{equation}
C := \max \set{ \frac{ 2 \dB \dX }{ \lmin } , C_0 \prod_{i = 0}^T \frac{1}{ 1 - \tau_i }, \frac{ 64 n^2 \lmax^2 \dy^2 }{ \lmin^3 \left( 1 - \frac{ 4 n \lmax \dy }{ \lmin } \tau_T \right) } }. \notag
\end{equation}
\end{lem}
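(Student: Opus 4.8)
The plan is to track the two sequences $\mu_i^{(t)} := \mathrm{Tr}(\Phi_i X_t)$ and $r_i^{(t)} := \mathrm{Tr}(\Phi_i V_t)$, and to bound $C_t = \sum_i \eta_i/\mu_i^{(t)}$ by induction on $t$. The update $X_{t+1} = (1-\tau_t)X_t + \tau_t V_t$ gives $\mu_i^{(t+1)} = (1-\tau_t)\mu_i^{(t)} + \tau_t r_i^{(t)}$, and by Proposition~\ref{PROP_FEASIBILITY} we have $r_i^{(t)}\ge 0$ and $\mu_i^{(t)}>0$. From this I would first derive the exact recursion
\begin{equation}
C_{t+1} = \frac{C_t}{1-\tau_t} - \frac{\tau_t}{1-\tau_t}\sum_{i=1}^n \frac{\eta_i\, r_i^{(t)}}{\mu_i^{(t)}\,\mu_i^{(t+1)}}, \notag
\end{equation}
which splits the analysis into a benign multiplicative growth $C_t/(1-\tau_t)$ and a nonnegative \emph{gain} term $G_t := \sum_i \eta_i r_i^{(t)}/(\mu_i^{(t)}\mu_i^{(t+1)})$ coming from the Frank--Wolfe step. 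The base case is immediate: dropping the $r_i^{(t)}$ contributions gives $\mu_i^{(t)} \ge \big(\prod_{k=0}^{t-1}(1-\tau_k)\big)\mu_i^{(0)}$, hence $C_t \le C_0\prod_{k=0}^{T}(1-\tau_k)^{-1}$ for every $t\le T$, which is the second term in the definition of $C$.

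The heart of the argument is a lower bound on $G_t$ that \emph{scales with} $C_t$. For this I would exploit the optimality of the vertex $V_t = c\,u_t u_t^H$: testing $\langle V_t,\naf(X_t)\rangle \le \langle S_j,\naf(X_t)\rangle$ against $S_j := c\,v_j v_j^H$, where $v_j$ is a leading eigenvector of $\Phi_j$, and discarding the nonnegative cross terms $\langle S_j,\Phi_k\rangle$ for $k\neq j$ together with $\abs{\mathrm{Tr}(\Psi\,\cdot)}\le c\,\dB$, yields
\begin{equation}
\sum_{i=1}^n \frac{\eta_i\, r_i^{(t)}}{\mu_i^{(t)}} \;\ge\; c\,\frac{\eta_j}{\mu_j^{(t)}}\,\norm{\Phi_j} - 2c\,\dB \qquad \text{for every } j. \notag
\end{equation}
The crucial observation, and the reason $\lmin$ enters the constants, is that $c\,\norm{\Phi_j} \ge \mathrm{Tr}(\Phi_j X_0) \ge \lmin$ for all $j$ (since $X_0\in\mathcal{C}$); choosing $j$ to maximize $\eta_j\norm{\Phi_j}/\mu_j^{(t)}$ and using $\sum_i \eta_i/\mu_i^{(t)} = C_t$ then gives $\sum_i \eta_i r_i^{(t)}/\mu_i^{(t)} \gtrsim (\lmin/n)\,C_t - 2c\,\dB$. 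This is exactly the self-correcting mechanism: a coordinate with small $\mu_j^{(t)}$ forces a large negative eigenvalue into $\naf(X_t)$, which the vertex must partially cancel.

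The hard part will be converting this \emph{weighted-sum} gain into a bound on $G_t$ strong enough to beat the factor $1/(1-\tau_t)$. The naive estimate $\mu_i^{(t+1)}\le\lmax$ only gives an effective gain coefficient $\lmin/(n\lmax)<1$, under which the recursion still diverges polynomially; so I would instead keep the amplification $r_i^{(t)}/\mu_i^{(t+1)}$, which is of order $1/\tau_t$ precisely on the coordinates the vertex boosts ($r_i^{(t)}\gg\mu_i^{(t)}$), and argue that a constant fraction of the weighted mass $\sum_i \eta_i r_i^{(t)}/\mu_i^{(t)}$ sits on such boosted coordinates. This is where the threshold $2\dB\dX/\lmin$ (first term of $C$) and the requirement $1 - 4n(\lmax/\lmin)\dy\,\tau_T > 0$ originate: once $\tau_t$ is small enough (i.e.\ $t\ge T$), the amplified gain exceeds the growth, and a case split on whether $C_t$ is above or below $2\dB\dX/\lmin$ shows $C_{t+1}\le\max\{C_t, \text{third term}\}\le C$, closing the induction. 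I expect this aggregation step, controlling many simultaneously-small coordinates against a rank-one vertex, to be the main obstacle; everything else is bookkeeping with the constants $\dA,\dB,\dX,\dy,\lmin,\lmax$.
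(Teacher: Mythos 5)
Your scaffolding matches the paper's proof closely: the same recursion (the paper keeps only a single subtracted term $\xi_k$ from your exact identity, via its inequality \eqref{eq_better_bound}), the same product bound $C_t \leq C_0 \prod_{k=0}^{T} (1-\tau_k)^{-1}$ for the early iterations (Proposition \ref{prop_simple_bound}), the same use of vertex optimality to lower-bound the weighted gain, and the same role for the condition $1 - 4n(\lmax/\lmin)\dy\,\tau_T > 0$. Your optimality test against $S_j = c\,v_j v_j^H$ is correct but costs a factor of $n$: the paper instead tests $V_t$ against $X_0$, and since $\iprod{X_0}{\Phi_i} \geq \lmin$ holds for \emph{every} $i$ simultaneously, it gets $\sum_i \eta_i r_i^{(t)}/\mu_i^{(t)} \geq \lmin C_t - \dB\dX \geq \lmin C_t / 2$ (once $C_t > 2\dB\dX/\lmin$) in one line, with no maximization over $j$. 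That difference only degrades constants and is fixable.

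The genuine gap is precisely the step you defer: converting the weighted-sum lower bound into an amplified gain. Your plan asserts that "a constant fraction of the weighted mass sits on boosted coordinates," but you neither prove this nor need it; the paper's Proposition \ref{prop_key} resolves the crux with a short pigeonhole contradiction producing a \emph{single} index $k$ that is simultaneously small and boosted: if $C_t > 2\dB\dX/\lmin$, there exists $k$ with $1/\mu_k^{(t)} \geq \lmin C_t / (4n\lmax\dy)$ and $r_k^{(t)} \geq \lmin/4$ --- otherwise, splitting the sum over $\Omega := \set{ i : 1/\mu_i^{(t)} \geq \lmin C_t/(4n\lmax\dy) }$ and its complement caps the weighted sum strictly below $\lmin C_t/2$, since each $i \in \Omega$ would contribute less than $(\lmin/4)\,\eta_i/\mu_i^{(t)}$ and each $i \notin \Omega$ at most $\dy\lmax \cdot \lmin C_t/(4n\lmax\dy)$. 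Plugging that one coordinate into the recursion already yields $C_{t+1} \leq C_t$ whenever $1 - 4n(\lmax/\lmin)\dy\,\tau_t > 0$ and $C_t \geq \kappa_t$. Note also that your closing induction ``$C_{t+1} \leq \max\set{C_t, \text{third term}}$'' is rougher than what is actually true: the decay threshold $\kappa_t$ is $t$-dependent, so the paper's Proposition \ref{prop_key_2} must track the first time $t^*$ at which $C_{t^*} < \kappa_{t^*}$ and then bound $C_{t+1} \leq (1-\tau_t)^{-1}\kappa_t$ for all later $t$, using monotonicity of $(\tau_t)$ --- a subtlety the paper itself flags as ``slightly more delicate.'' Without the pigeonhole lemma and this case analysis, your proposal is a correct outline with the decisive step missing.
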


\begin{proof}
See Section \ref{sec_lem_key}.
\end{proof}

The following lemma mimics \cite[Lemma 2]{Nesterov2015}.
Define
\begin{align}
B_t & := \alpha_0 \max \set{ \iprod{ \naf ( X_0 ) }{ X_0 - X } : X \in \mathcal{X} } \notag \\
& \quad \, \, + \left( \sum_{k = 1}^t \frac{ \alpha_k^2 }{ S_{k - 1} } \right) \gamma , \notag
\end{align}
where $\gamma := C^2 \dA^2 \dX^2$.

\begin{lem} \label{LEM_NESTEROV}
For any $t \geq 0$ and $X \in \mathcal{X}$, we have
\begin{equation}
S_t g ( X_t ) \leq \sum_{k = 0}^t \left\{ \alpha_k \left[ g ( X_k ) + \iprod{ \naf ( X_k ) }{ X - X_k } \right] \right\} + B_t \notag
\end{equation}
\end{lem}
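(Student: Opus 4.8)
The plan is to prove the lemma by induction on $t$, following the weighted-averaging scheme of \cite{Nesterov2015}. The only place where Lipschitz continuity of $\naf$ is normally invoked is a one-step ``descent'' estimate, so the whole argument rests on replacing that estimate by one that uses the uniform bound $C_t \leq C$ supplied by Lemma \ref{LEM_KEY}. Writing $\ell_k ( X ) := g ( X_k ) + \iprod{ \naf ( X_k ) }{ X - X_k }$ for the linearization of $g$ at $X_k$, the target reads $S_t g ( X_t ) \leq \sum_{k = 0}^t \alpha_k \ell_k ( X ) + B_t$, and I would first record the identities $S_{t+1} = S_t + \alpha_{t+1}$, $1 - \tau_t = S_t / S_{t+1}$ and $X_{t+1} - X_t = \tau_t ( V_t - X_t )$ that drive the telescoping.

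The key preliminary step is the non-Lipschitz descent inequality. Starting from the integral remainder
\begin{equation}
g ( X_{t+1} ) - g ( X_t ) - \iprod{ \naf ( X_t ) }{ X_{t+1} - X_t } = \int_0^1 \iprod{ \naf ( Y_s ) - \naf ( X_t ) }{ X_{t+1} - X_t } \, ds , \notag
\end{equation}
with $Y_s := X_t + s ( X_{t+1} - X_t )$, I would substitute $\naf ( X ) = \Psi - \sum_i ( \eta_i / \mathrm{Tr} ( \Phi_i X ) ) \Phi_i$ and use $\mathrm{Tr} ( \Phi_i Y_s ) - \mathrm{Tr} ( \Phi_i X_t ) = s \, \mathrm{Tr} ( \Phi_i ( X_{t+1} - X_t ) )$ to rewrite the integrand as $s \sum_i \eta_i ( \mathrm{Tr} ( \Phi_i ( X_{t+1} - X_t ) ) )^2 / ( \mathrm{Tr} ( \Phi_i Y_s ) \mathrm{Tr} ( \Phi_i X_t ) )$, which is manifestly non-negative. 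I would then bound $( \mathrm{Tr} ( \Phi_i ( X_{t+1} - X_t ) ) )^2 \leq \dA^2 \norm{ X_{t+1} - X_t }^2 \leq \dA^2 \dX^2 \tau_t^2$ and, crucially, control the denominators along the \emph{entire} segment: since $\mathrm{Tr} ( \Phi_i Y_s )$ is a convex combination of the two positive endpoints $\mathrm{Tr} ( \Phi_i X_t )$ and $\mathrm{Tr} ( \Phi_i X_{t+1} )$ (positivity by Proposition \ref{PROP_FEASIBILITY}), Lemma \ref{LEM_KEY} applied at $X_t$ and $X_{t+1}$ gives $\sum_i \eta_i / ( \mathrm{Tr} ( \Phi_i Y_s ) \mathrm{Tr} ( \Phi_i X_t ) ) \leq C^2$. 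Integrating $\int_0^1 s \, ds = 1/2$ yields $g ( X_{t+1} ) \leq g ( X_t ) + \iprod{ \naf ( X_t ) }{ X_{t+1} - X_t } + \tfrac{1}{2} C^2 \dA^2 \norm{ X_{t+1} - X_t }^2$, the exact surrogate for an $L$-smooth estimate.

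With this in hand I would close the induction. Multiplying the descent inequality by $S_{t+1}$ and using $S_{t+1} ( X_{t+1} - X_t ) = \alpha_{t+1} ( V_t - X_t )$ turns the linear term into $\alpha_{t+1} \iprod{ \naf ( X_t ) }{ V_t - X_t }$ and bounds the quadratic term by $\tfrac{1}{2} S_t^{-1} \alpha_{t+1}^2 C^2 \dA^2 \dX^2 \leq S_t^{-1} \alpha_{t+1}^2 \gamma$, which is precisely the increment $B_{t+1} - B_t$. I would then invoke the Frank-Wolfe optimality of $V_t$ in Algorithm \ref{alg_fw}, namely $\iprod{ \naf ( X_t ) }{ V_t - X } \leq 0$ for every $X \in \mathcal{X}$, to replace $\iprod{ \naf ( X_t ) }{ V_t - X_t }$ by $\iprod{ \naf ( X_t ) }{ X - X_t }$, so that $\alpha_{t+1} [ g ( X_t ) + \iprod{ \naf ( X_t ) }{ V_t - X_t } ] \leq \alpha_{t+1} \ell_t ( X )$; feeding the induction hypothesis $S_t g ( X_t ) \leq \sum_{k=0}^t \alpha_k \ell_k ( X ) + B_t$ into $S_{t+1} g ( X_t ) = S_t g ( X_t ) + \alpha_{t+1} g ( X_t )$ then assembles the bound at level $t+1$. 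The base case $t = 0$ reduces, after cancelling $\alpha_0 g ( X_0 )$, to $0 \leq \alpha_0 \iprod{ \naf ( X_0 ) }{ X - X_0 } + B_0$, which is immediate since the maximization defining $B_0$ includes $X$ itself.

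I expect the main obstacle to be the descent inequality rather than the bookkeeping: everything hinges on controlling $\sum_i \eta_i / ( \mathrm{Tr} ( \Phi_i Y_s ) \mathrm{Tr} ( \Phi_i X_t ) )$ \emph{uniformly over the segment} and not merely at the iterates, so one must verify that Lemma \ref{LEM_KEY} and Proposition \ref{PROP_FEASIBILITY} keep every $Y_s$ inside $\mathrm{dom} ( g )$ with denominators bounded away from zero. A secondary subtlety is the norm pairing: $\dX$ is measured in the spectral norm, so the estimate $( \mathrm{Tr} ( \Phi_i Z ) )^2 \leq \dA^2 \norm{ Z }^2$ must be justified (it is clean when the $\Phi_i$ are rank one, as in the phase-retrieval application), and one must track the exact weights carefully so that the telescoped linear contributions land on the stated form $\sum_{k=0}^t \alpha_k \ell_k ( X )$.
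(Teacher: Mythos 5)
Your proposal is sound in substance but takes a genuinely different route from the paper's proof, which contains no descent lemma and never examines the segment between iterates. The paper runs the induction in the style of Nesterov's Lemma 2: it applies the induction hypothesis, lower-bounds $g ( X_t ) - g ( X_{t+1} )$ by \emph{convexity} of $g$ at $X_{t+1}$, collects terms via $\alpha_{t+1} ( X - X_{t+1} ) + S_t ( X_t - X_{t+1} ) = \alpha_{t+1} ( X - V_t )$, invokes Frank--Wolfe optimality $\iprod{ \naf ( X_t ) }{ X - V_t } \geq 0$, and reduces everything to the single cross term $\iprod{ \naf ( X_{t+1} ) - \naf ( X_t ) }{ X - V_t }$, which it controls by H\"{o}lder together with the \emph{endpoint-only} estimate $\norm{ \naf ( X_{t+1} ) - \naf ( X_t ) } \leq \frac{ \tau_t }{ 1 - \tau_t } \dA^2 \dX C_t^2 \leq \frac{ \alpha_{t+1} }{ S_t } \dA^2 \dX C^2$. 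Your integral-remainder surrogate is a valid alternative: $Y_s$ stays in $\mathrm{dom} ( g )$ since $\mathrm{Tr} ( \Phi_i Y_s ) \geq ( 1 - \tau_t ) \mathrm{Tr} ( \Phi_i X_t ) > 0$ (using $\mathrm{Tr} ( \Phi_i V_t ) \geq 0$ from Proposition \ref{PROP_FEASIBILITY}). However, your claimed segment bound $\sum_i \eta_i / ( \mathrm{Tr} ( \Phi_i Y_s ) \mathrm{Tr} ( \Phi_i X_t ) ) \leq C^2$ is slightly off: the honest bound is $C_t^2 / ( 1 - \tau_t )$ (or $2 C^2$ via the two endpoints), and to factor the sum into a product of $C$-type sums you need the same $\eta_i \leq \eta_i^2$ step the paper uses (valid since $\eta_i \in \{0\} \cup [1, \infty)$), which you leave implicit. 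Both slips are harmless: the factor $\frac{1}{2}$ from $\int_0^1 s \, ds$ together with $\tau_t^2 S_{t+1} / ( 1 - \tau_t ) = \alpha_{t+1}^2 / S_t$ absorbs them, so $\gamma = C^2 \dA^2 \dX^2$ still covers the increment $B_{t+1} - B_t$.

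There is one wrinkle you did not flag: your telescoping does not prove the lemma \emph{as stated}. Writing $\ell_k ( X ) := g ( X_k ) + \iprod{ \naf ( X_k ) }{ X - X_k }$, your inductive step yields $S_{t+1} g ( X_{t+1} ) \leq \sum_{k=0}^{t} \alpha_k \ell_k ( X ) + \alpha_{t+1} \ell_t ( X ) + B_{t+1}$, i.e., the new linearization enters at $X_t$ with weight $\alpha_{t+1}$, whereas the statement requires $\alpha_{t+1} \ell_{t+1} ( X )$. What your induction actually proves is the shifted variant $S_t g ( X_t ) \leq \alpha_0 \ell_0 ( X ) + \sum_{k=1}^{t} \alpha_k \ell_{k-1} ( X ) + B_t$; this is precisely why the paper works with the gradient at $X_{t+1}$ (convexity plus the gradient-difference bound) rather than with a descent estimate anchored at $X_t$. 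The shifted form is equally useful downstream, since each $\ell_k ( X^\star ) \leq g^\star$ and the weights still sum to $S_t$, so the corollary $g ( X_t ) - g^\star \leq B_t / S_t$ and Theorem \ref{thm_main} follow unchanged---but you should either restate the lemma accordingly or repair the step. Your norm-pairing caveat is fair, though the paper shares the same looseness ($\dX$ is defined with the spectral norm yet paired against $\norm{ \cdot }_*$); reading $\dX$ as the nuclear-norm diameter makes both proofs go through verbatim.
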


\begin{proof}
See Section \ref{sec_nesterov}.
\end{proof}

Set $X = X^\star$, a minimizer, in Lemma \ref{LEM_NESTEROV}, and notice that $$g ( X_k ) + \iprod{ \naf ( X_k ) }{ X^\star - X_k } \leq g^\star$$ for all $k$.
We immediately obtain a convergence guarantee for any $( a_t )_{t \geq 0}$.

\begin{cor} 
We have $g ( X_t ) - g^\star \leq ( B_t / S_t )$.
\end{cor}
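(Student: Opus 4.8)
The plan is to obtain the corollary directly from Lemma~\ref{LEM_NESTEROV} by instantiating the free point $X$ at a minimizer $X^\star$ of \eqref{eq_prototype}, i.e. a point with $g(X^\star) = g^\star$. Essentially all of the analytic effort has already been spent upstream: Lemma~\ref{LEM_NESTEROV} supplies the weighted aggregate bound, and Lemma~\ref{LEM_KEY} guarantees that $C$ (hence $\gamma = C^2 \dA^2 \dX^2$ and thus $B_t$) is finite. What remains for the corollary is only a short convexity argument followed by a normalization.

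First I would write Lemma~\ref{LEM_NESTEROV} with $X = X^\star$, namely
\begin{equation}
S_t g ( X_t ) \leq \sum_{k = 0}^t \alpha_k \left[ g ( X_k ) + \iprod{ \naf ( X_k ) }{ X^\star - X_k } \right] + B_t . \notag
\end{equation}
The decisive step is the first-order convexity inequality: since $g$ is convex and is differentiable at each iterate $X_k$ (differentiability being ensured by Proposition~\ref{PROP_FEASIBILITY}, which keeps $X_k \in \mathrm{dom}(g)$ so that $\naf(X_k)$ is well defined), the supporting hyperplane at $X_k$ gives
\begin{equation}
g ( X_k ) + \iprod{ \naf ( X_k ) }{ X^\star - X_k } \leq g ( X^\star ) = g^\star \notag
\end{equation}
for every $k$. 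Substituting this into each summand and using $\alpha_k \geq 0$ yields
\begin{equation}
S_t g ( X_t ) \leq g^\star \sum_{k = 0}^t \alpha_k + B_t = S_t \, g^\star + B_t , \notag
\end{equation}
where the final equality is just the definition $S_t = \sum_{k=0}^t \alpha_k$. Dividing through by $S_t$ and rearranging gives $g(X_t) - g^\star \leq B_t / S_t$, the claimed bound.

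I do not expect a genuine obstacle here; the only matters needing care are minor bookkeeping ones. One must confirm that $S_t > 0$ so that the division is legitimate, which is immediate because $\alpha_0 \neq 0$ and the $\alpha_k$ are non-negative, and that $\naf(X_k)$ exists at each iterate, which is exactly what Proposition~\ref{PROP_FEASIBILITY} provides. The substantive difficulty lies entirely downstream: turning $B_t / S_t$ into the explicit $O(1/t)$ rate of Theorem~\ref{thm_main} requires the finiteness and explicit control of the constant $\gamma$ furnished by Lemma~\ref{LEM_KEY}, rather than anything internal to this corollary.
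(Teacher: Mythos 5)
Your proof is correct and follows exactly the paper's route: instantiating $X = X^\star$ in Lemma~\ref{LEM_NESTEROV}, applying the first-order convexity inequality $g(X_k) + \iprod{\naf(X_k)}{X^\star - X_k} \leq g^\star$ termwise, and dividing by $S_t = \sum_{k=0}^t \alpha_k > 0$. Your added bookkeeping remarks (well-definedness of $\naf(X_k)$ via Proposition~\ref{PROP_FEASIBILITY} and positivity of $S_t$ via $\alpha_0 \neq 0$) are sound and merely make explicit what the paper leaves implicit.
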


Now we consider the special case where $\alpha _t = t + 1$. 
As then $S_t = ( t + 1 ) ( t + 2 ) / 2$, this choice corresponds to $\tau_t = 2 / ( t + 3 )$ as in Theorem \ref{thm_main}.

\begin{prop} \label{PROP_FINAL}
Choose $\alpha_t = t + 1$. 
We have 
\begin{equation}
\frac{B_t}{S_t} < \frac{ 8 \left( \max \set{ \gamma_1, \gamma_2, \gamma_3 } \right)^2 \dA^2 \dX^2 }{ t + 2 } + \frac{ 2 \dX \norm{ \naf ( X_0 ) } }{ ( t + 1 ) ( t + 2 ) }, \notag
\end{equation}
where 
\begin{align}
\gamma_1 & := \frac{ 2 \dB \dX }{ \lmin }, \quad \gamma_2 := 2 \frac{n \dy}{\lmin} \left( \frac{ 4 n \lmax \dy }{\lmin} + 1 \right)^2, \notag \\
\gamma_3 & := \frac{ 64 n^2 \lmax^2 \dy^2 }{ \lmin^3 } \left( \frac{ 4 n \lmax \dy }{ \lmin } + 1 \right). \notag
\end{align}
\end{prop}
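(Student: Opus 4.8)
The plan is to prove Proposition \ref{PROP_FINAL} by substituting $\alpha_t = t+1$ into the definition of $B_t$ and bounding each of its two terms separately, then dividing by $S_t = (t+1)(t+2)/2$. First I would handle the first summand of $B_t$, namely $\alpha_0 \max\set{ \iprod{ \naf ( X_0 ) }{ X_0 - X } : X \in \mathcal{X} }$. Since $\alpha_0 = 1$, I would bound the inner product by Cauchy--Schwarz in the spectral/nuclear norm duality, $\iprod{ \naf ( X_0 ) }{ X_0 - X } \leq \norm{ \naf ( X_0 ) } \cdot \norm{ X_0 - X }_*$, and control $\norm{ X_0 - X }_*$ by $\dX$ (the diameter of $\mathcal{C}$). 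Dividing this contribution by $S_t$ yields a term of the form $2 \dX \norm{ \naf ( X_0 ) } / \big( (t+1)(t+2) \big)$, which is exactly the second term appearing in the claimed bound. It is worth double-checking that the diameter $\dX$ used here is measured consistently with the definition in the excerpt; a factor of $2$ may enter depending on whether one bounds $\norm{ X_0 - X }_*$ directly or through the radius.

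Next I would turn to the second summand, $\left( \sum_{k=1}^t \alpha_k^2 / S_{k-1} \right) \gamma$ with $\gamma = C^2 \dA^2 \dX^2$. With $\alpha_k = k+1$ and $S_{k-1} = k(k+1)/2$, the ratio simplifies nicely to $\alpha_k^2 / S_{k-1} = (k+1)^2 / \big( k(k+1)/2 \big) = 2(k+1)/k = 2 + 2/k$. Summing over $k = 1, \ldots, t$ gives $\sum_{k=1}^t (2 + 2/k) = 2t + 2 H_t$, where $H_t$ is the $t$-th harmonic number. The plan is to bound this sum by a quantity of order $t$; since $H_t \leq 1 + \log t$ grows slowly, a clean bound such as $2t + 2H_t \leq 4(t+1)$ or similar should suffice to absorb the logarithmic term into a linear one for the stated $O(1/t)$ conclusion. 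Dividing the resulting $\gamma \cdot O(t)$ by $S_t = (t+1)(t+2)/2$ produces the leading $8 \gamma^2 \dA^2 \dX^2 / (t+2)$ term, after identifying $C = \max\set{ \gamma_1, \gamma_2, \gamma_3 }$ via Lemma \ref{LEM_KEY} and tracking the numerical constant carefully so that it collapses to $8$.

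The key link to the earlier results is that Lemma \ref{LEM_KEY} furnishes the uniform bound $C_t \leq C$ on $C_t = \sum_i \eta_i / \mathrm{Tr}(\Phi_i X_t)$, and this $C$ is precisely $\max\set{ \gamma_1, \gamma_2, \gamma_3 }$ (modulo the reindexing between the $C$ of Lemma \ref{LEM_KEY} and the three $\gamma_j$ of the proposition). I would need to verify that the three cases in the definition of $C$ in Lemma \ref{LEM_KEY} match $\gamma_1, \gamma_2, \gamma_3$ after the substitution $\tau_T \to$ its limiting value; note that the $(1 - 4n\lmax\dy\tau_T/\lmin)$ denominators in Lemma \ref{LEM_KEY} get replaced by the $\big( 4n\lmax\dy/\lmin + 1 \big)$ factors in $\gamma_2, \gamma_3$, so I must confirm the algebra that converts the $T$-dependent constant into a $t$-independent one (presumably by choosing $T$ large enough, or evaluating at a specific admissible $T$). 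This bookkeeping — reconciling the product form $C_0 \prod (1-\tau_i)^{-1}$ and the $\tau_T$-dependent denominators with the closed-form $\gamma_j$ — is the main obstacle, since the estimate must come out with the exact constants advertised rather than merely the correct order. Everything else reduces to the elementary summation $\sum_{k=1}^t \alpha_k^2 / S_{k-1}$ and a Cauchy--Schwarz step, which are routine once set up.
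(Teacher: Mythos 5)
Your proposal is correct and follows essentially the same route as the paper's proof: H\"{o}lder's inequality on the first summand of $B_t$ yields the $2 \dX \norm{ \naf ( X_0 ) } / \left( ( t + 1 ) ( t + 2 ) \right)$ term, the computation $\sum_{k=1}^{t} \alpha_k^2 / S_{k-1} = \sum_{k=1}^{t} ( 2 + 2/k ) \leq 4t$ gives the leading term after dividing by $S_t$, and the constant $C$ of Lemma \ref{LEM_KEY} is identified with $\max \set{ \gamma_1, \gamma_2, \gamma_3 }$ exactly as you anticipate. The bookkeeping you flag as the main obstacle resolves just as you guess (``evaluating at a specific admissible $T$''): the paper takes $T = 8 n ( \lmax / \lmin ) \dy - 1$, so that $1 - 4 n ( \lmax / \lmin ) \dy \tau_T = \left( 4 n \lmax \dy / \lmin + 1 \right)^{-1}$ turns the $\tau_T$-dependent denominator into the factor in $\gamma_3$, while the product telescopes as $\prod_{k=0}^{T} ( 1 - \tau_k )^{-1} = ( T + 2 ) ( T + 3 ) / 2 < 2 \left( 4 n \lmax \dy / \lmin + 1 \right)^2$, which combined with $C_0 \leq n \dy / \lmin$ gives $\gamma_2$.
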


\begin{proof}
See Section \ref{sec_final}.
\end{proof}

\subsection{Proof of Proposition \ref{PROP_FEASIBILITY}} \label{sec_feasibility}

Recall that $V_t$ is always a positive semi-definite matrix of rank $1$, as discussed in Section 4. 
Since $\Phi_i$ is also positive semi-definite, this implies $\mathrm{Tr} ( \Phi_i  V_t ) \geq 0$ for all $i$ and $t$.

We prove the second claim by induction. 
The second claim holds true for $t = 0$ by assumption.
Suppose $\mathrm{Tr} ( \Phi_i X_t ) > 0$ for some $t \geq 0$ for all $i$. 
Because of the assumption that $\alpha_0 \neq 0$, we always have $\tau_t < 1$ for all $t$. 
Then 
\begin{align}
\mathrm{Tr} ( \Phi_i X_{t+1} ) & = ( 1 - \tau ) \mathrm{Tr} ( \Phi_i X_t ) + \tau_t \mathrm{Tr} ( \Phi_i V_t ) \notag \\
& \geq ( 1 - \tau ) \mathrm{Tr} ( \Phi_i X_t ) > 0, \notag
\end{align}
where the first inequality is by the first claim.

\subsection{Proof of Lemma \ref{LEM_KEY}} \label{sec_lem_key}

Consider the sequence $( C_t )_{t \geq 0}$. 
Roughly speaking, the idea behind the proof is to show that there exists some $T > 0$, such that $C_{t + 1} \leq C_t$ for all $t \geq T$; then we can bound $C_t$ from above by $C_T$ for all $t \geq T$, a constant independent of $t$. 
Notice that, however, the actual argument in this proof is slightly more delicate (cf. the proof of Proposition \ref{prop_key_2}).

A simple bound on $C_{t+1}$ is 
\begin{align}
C_{t+1} &= \sum_{i = 1}^{n} \frac{ \eta_i }{ \mathrm{Tr} ( \Phi_i X_{t+1} ) } \notag \\
&\leq \frac{1}{ ( 1 - \tau_t ) } \sum_{ i = 1 }^n \frac{ \eta_i }{ \mathrm{Tr} ( \Phi_i X_t ) } = \frac{1}{1 - \tau_t} C_t, \label{eq_simple_bound}
\end{align}
using the fact that $\mathrm{Tr} ( \Phi_i X_{t+1} ) \geq ( 1 - \tau_t ) \mathrm{Tr} ( \Phi_i X_t )$. This yields the following simple result.

\begin{prop} \label{prop_simple_bound}
We have $C_t \leq C_0 \prod_{i = 0}^{t} ( 1 - \tau_t )^{-1}$.
\end{prop}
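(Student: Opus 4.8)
The plan is to obtain the claim by simply iterating the one-step bound already established in \eqref{eq_simple_bound}, namely $C_{t+1} \le (1-\tau_t)^{-1} C_t$. Before iterating, I would first confirm that every factor $(1-\tau_t)^{-1}$ is a well-defined positive number. Since $\alpha_0 \neq 0$ and all $\alpha_k \ge 0$, we have $S_{t+1} = \sum_{k=0}^{t+1}\alpha_k > \alpha_{t+1}$, so that $\tau_t = \alpha_{t+1}/S_{t+1} \in (0,1)$ and hence $1-\tau_t > 0$. This is exactly the observation already used in the proof of Proposition~\ref{PROP_FEASIBILITY}, so it costs nothing to reuse it here. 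In particular each factor satisfies $(1-\tau_t)^{-1} \ge 1$.

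Next I would run a one-line induction on $t$ (or, equivalently, telescope the chain of inequalities). The base case $t=0$ is the trivial identity $C_0 \le C_0$. For the inductive step, assuming $C_t \le C_0 \prod_{i=0}^{t-1}(1-\tau_i)^{-1}$, I apply \eqref{eq_simple_bound} once more to get $C_{t+1} \le (1-\tau_t)^{-1} C_t \le C_0 \prod_{i=0}^{t}(1-\tau_i)^{-1}$, closing the induction. I would also note the minor index bookkeeping: the telescoped bound naturally runs the product up to $i=t-1$, whereas the statement carries it to $i=t$; since the extra factor is $\ge 1$, the displayed (weaker) inequality follows immediately from the tighter telescoped one, so no issue arises.

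Honestly, there is no substantive obstacle in this proposition: all of the analytic content is already contained in \eqref{eq_simple_bound}, which in turn rests on the monotonicity fact $\mathrm{Tr}(\Phi_i X_{t+1}) \ge (1-\tau_t)\,\mathrm{Tr}(\Phi_i X_t)$ coming from the Frank-Wolfe update together with $\mathrm{Tr}(\Phi_i V_t) \ge 0$ from Proposition~\ref{PROP_FEASIBILITY}. The proposition merely records the accumulated product form of that recursion. The only points requiring even minimal care are the positivity of the factors and the off-by-one in the product index, both immediate. I would emphasize in the writeup that this bound is deliberately crude---the product can diverge as $t \to \infty$---which is precisely why the more delicate argument of Lemma~\ref{LEM_KEY} is subsequently needed to produce a genuinely $t$-independent bound on $C_t$.
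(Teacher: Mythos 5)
Your proposal is correct and takes essentially the same route as the paper, which presents Proposition~\ref{prop_simple_bound} as an immediate consequence of iterating the one-step bound \eqref{eq_simple_bound} without further argument. Your additional remarks---that $1 - \tau_t > 0$ because $\alpha_0 \neq 0$ forces $\tau_t \in (0,1)$, and that the telescoped product naturally stops at $i = t-1$ while the stated bound's extra factor is $\geq 1$---are harmless clarifications of the same argument.
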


However, as $1 - \tau_t < 1$, the upper bound \eqref{eq_simple_bound} is not sharp enough for our purpose.

Notice that for any $k$, we have
\begin{align}
C_{t+1} & = \sum_{i \neq k} \frac{\eta_i}{\mathrm{Tr} ( \Phi_i X_{t+1} )} + \frac{ \eta_k }{ \mathrm{Tr} ( \Phi_k X_{t+1} ) } \notag \\
& \leq \sum_{i \neq k} \frac{ \eta_i }{ ( 1 - \tau_t ) \mathrm{Tr} ( \Phi_i X_t ) } + \frac{ \eta_k }{ \mathrm{Tr} ( \Phi_k X_{t+1} ) } \notag \\
& = \frac{ C_t }{1 - \tau_t} - \frac{ \eta_k }{ ( 1 - \tau_t ) \mathrm{Tr} ( \Phi_k X_t ) } + \frac{\eta_k}{ \mathrm{Tr} ( \Phi_k X_{t+1} ) } \notag \\
& = \frac{ C_t }{ 1 - \tau_t } - \frac{ \eta_k \tau_t \mathrm{Tr} ( \Phi_k V_t ) }{ \left[ ( 1 - \tau_t ) \mathrm{Tr} ( \Phi_k X_t ) \right] \mathrm{Tr} ( \Phi_k X_{t+1} ) } \notag \\
& \leq  \frac{ C_t }{ 1 - \tau_t } - \xi_k \label{eq_better_bound}
\end{align}
where 
\begin{align}
\xi_k & := \frac{ \tau_t \mathrm{Tr} ( \Phi_k V_t ) }{ \left[ ( 1 - \tau_t ) \mathrm{Tr} ( \Phi_k X_t ) \right] \mathrm{Tr} ( \Phi_k X_{t+1} ) }; \notag
\end{align}
the last inequality is due to the fact that either $\eta_k = 0$ or $\eta_k \geq 1$ in the Poisson phase retrieval problem.
This bound is sharper than \eqref{eq_simple_bound}, as $\xi_k$ is always non-negative.

\begin{prop} \label{prop_key}
If $C_t > 2 \lmin^{-1} \dB \dX$, then there exists some $k \leq n$  such that
\begin{align}
\frac{1}{ \mathrm{Tr} ( \Phi_k X_t ) } &\geq \frac{\lmin C_t}{4 n \lmax \dy }, \notag \\
\mathrm{Tr} ( \Phi_k V_t ) &\geq \frac{\lmin}{4}. \notag
\end{align}
\end{prop}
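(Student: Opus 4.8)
The plan is to pass from the aggregate quantity $\sum_{i=1}^n w_i \mathrm{Tr}(\Phi_i V_t)$, where I abbreviate $w_i := \eta_i / \mathrm{Tr}(\Phi_i X_t)$ (so that $C_t = \sum_{i=1}^n w_i$ and $\naf(X_t) = \Psi - \sum_{i=1}^n w_i \Phi_i$), to a statement about a single coordinate $k$, via a two-sided averaging argument. First I would exploit the optimality of the Frank-Wolfe vertex: since $V_t$ minimizes $\iprod{s}{\naf(X_t)}$ over $s \in \mathcal{C}$ and $X_0 \in \mathcal{C}$, we have $\iprod{V_t}{\naf(X_t)} \le \iprod{X_0}{\naf(X_t)}$. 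Expanding both sides using $\naf(X_t) = \Psi - \sum_i w_i \Phi_i$ and rearranging isolates the weighted sum:
\begin{equation}
\sum_{i=1}^n w_i \mathrm{Tr}(\Phi_i V_t) \geq \sum_{i=1}^n w_i \mathrm{Tr}(\Phi_i X_0) + \mathrm{Tr}( \Psi ( V_t - X_0 ) ). \notag
\end{equation}

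The crucial step is to bound the first term from below by $\lmin C_t$: since $\mathrm{Tr}(\Phi_i X_0) \ge \lmin$ by the definition of $\lmin$, we get $\sum_i w_i \mathrm{Tr}(\Phi_i X_0) \ge \lmin \sum_i w_i = \lmin C_t$. For the remaining term I would use $\abs{ \mathrm{Tr}( \Psi ( V_t - X_0 ) ) } \le \norm{ \Psi } \norm{ V_t - X_0 }_* \le \dB \dX$, the inner product of $\Psi$ with a difference of feasible points, so that $\sum_i w_i \mathrm{Tr}(\Phi_i V_t) \ge \lmin C_t - \dB \dX$. This is exactly where the hypothesis enters: $C_t > 2 \lmin^{-1} \dB \dX$ means $\dB \dX < \lmin C_t / 2$, whence
\begin{equation}
\sum_{i=1}^n w_i \mathrm{Tr}(\Phi_i V_t) > \frac{ \lmin C_t }{ 2 }. \notag
\end{equation}

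It remains to localize this to one index. I would set $G := \set{ i : \mathrm{Tr}(\Phi_i V_t) \ge \lmin / 4 }$ and split the sum. On the complement each $\mathrm{Tr}(\Phi_i V_t) < \lmin / 4$, and the summands are non-negative by Proposition \ref{PROP_FEASIBILITY}, so $\sum_{i \notin G} w_i \mathrm{Tr}(\Phi_i V_t) < ( \lmin / 4 ) \sum_i w_i = \lmin C_t / 4$; subtracting gives $\sum_{i \in G} w_i \mathrm{Tr}(\Phi_i V_t) > \lmin C_t / 4$. Since $V_t \in \mathcal{C}$ and each $\Phi_i$ is positive semidefinite, $\mathrm{Tr}(\Phi_i V_t) \le \lmax$, hence
\begin{equation}
\lmax \sum_{i \in G} w_i > \frac{ \lmin C_t }{ 4 }, \quad \text{so} \quad \sum_{i \in G} w_i > \frac{ \lmin C_t }{ 4 \lmax }. \notag
\end{equation}
As $\abs{ G } \le n$, some $k \in G$ satisfies $w_k > \lmin C_t / ( 4 n \lmax )$. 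Membership $k \in G$ gives the second assertion $\mathrm{Tr}(\Phi_k V_t) \ge \lmin / 4$, and since $\eta_k \le \dy$ we obtain $1 / \mathrm{Tr}(\Phi_k X_t) = w_k / \eta_k \ge w_k / \dy > \lmin C_t / ( 4 n \lmax \dy )$, which is the first assertion.

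I expect the main obstacle to be the lower bound on the weighted sum, specifically the realization that the Frank-Wolfe optimality inequality should be tested against the initial iterate $X_0$ rather than against $0$: only $X_0$ produces the term $\lmin C_t$ through the definition of $\lmin$, whereas testing against $0$ yields the vacuous bound $\sum_i w_i \mathrm{Tr}(\Phi_i V_t) \ge \mathrm{Tr}(\Psi V_t) \ge - \dB \dX$ that does not scale with $C_t$. The second delicate point is calibrating the cutoff $\lmin / 4$ defining $G$ against the factor $1/2$ supplied by the hypothesis, so that the two-sided averaging leaves a positive surplus $\lmin C_t / 4$ on the good set; the factor $n$ and the quantity $\lmax$ in the first conclusion then arise directly from the bound $\mathrm{Tr}(\Phi_i V_t) \le \lmax$ and the pigeonhole over at most $n$ indices.
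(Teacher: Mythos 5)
Your proof is correct and takes essentially the same route as the paper: both test the Frank--Wolfe optimality of $V_t$ against $X_0$ to obtain $\sum_{i=1}^n \eta_i \, \mathrm{Tr}(\Phi_i V_t)/\mathrm{Tr}(\Phi_i X_t) \geq \lmin C_t - \dB \dX \geq \lmin C_t / 2$, and then localize to a single index using the same two thresholds $\lmin/4$ and $\lmin C_t/(4 n \lmax \dy)$. The only difference is organizational: the paper splits on the set $\Omega$ of indices where $1/\mathrm{Tr}(\Phi_i X_t)$ is large and derives a contradiction, whereas you split on the set where $\mathrm{Tr}(\Phi_i V_t) \geq \lmin/4$ and conclude directly by averaging and pigeonhole --- a mirrored form of the same counting argument, with the bounds $\mathrm{Tr}(\Phi_i V_t) \leq \lmax$ and $\eta_i \leq \dy$ entering in the same way.
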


\begin{proof}

We prove by contradiction. 
By the definition of $V_t$, we have $\iprod{ V_t }{ \naf ( X_t ) } \leq \iprod{ X_0 }{ \naf X_t }$, and hence
\begin{align}
\sum_{i = 1}^n \frac{ \eta_i \iprod{ V_t }{ \Phi_i } }{ \iprod{ X_t }{ \Phi_i } } 
&  \geq \sum_{i = 1}^n \left( \frac{ \eta_i \iprod{ X_0 }{ \Phi_i } }{ \iprod{ X_t }{ \Phi_i } } \right) + \iprod{ \Psi }{ X_0 - V_t } \notag \\
&  \geq \sum_{i = 1}^n \frac{ \eta_i \iprod{ X_0 }{ \Phi_i } }{ \iprod{ X_t }{ \Phi_i } } - \dB \dX \notag \\
&  \geq \lmin C_t - \dB \dX \geq \frac{\lmin C_t}{2}. \notag
\end{align}

Let $\Omega$ be the set of $i$'s such that $\iprod{X_t}{\Phi_i}^{-1} \geq \lmin C_t / ( 4 n \lmax \dy )$. 
Suppose the claim of the proposition is false, i.e. for all $i \in \Omega$, $\iprod{ V_t }{ \Phi_i } < \lmin / 4$.
Then we have
\begin{align}
\sum_{i = 1}^n \frac{ \eta_i \iprod{ V_t }{ \Phi_i } }{ \iprod{ X_t }{ \Phi_i } } 
& = \sum_{i \in \Omega} \frac{ \eta_i \iprod{ V_t }{ \Phi_i } }{ \iprod{ X_t }{ \Phi_i } } + \sum_{i \notin \Omega} \frac{ \eta_i \iprod{ V_t }{ \Phi_i } }{ \iprod{ X_t }{ \Phi_i } } \notag \\
& < \frac{\lmin}{4} C_t + n \dy \lmax \frac{\lmin C_t}{4 n \lmax \dy} = \frac{\lmin C_t}{2}, \notag
\end{align}
a contradiction. This completes the proof.

\end{proof}


Assume $C_t > 2 \lmin^{-1} \dB \dX$. 
By Proposition \ref{prop_key} and \eqref{eq_better_bound}, we have
\begin{align}
& C_{t+1} \notag \\
& \quad \leq C_t \left\{ \frac{1}{1 - \tau_t} - \frac{ \frac{ \tau_t \lmin}{ 4 } }{ ( 1 - \tau_t ) \frac{ 4 n \lmax \dy }{ \lmin } \left[ ( 1 - \tau_t ) \frac{ 4 n \lmax \dy }{ \lmin C_t } + \tau_t \frac{ \lmin }{ 4 } \right] } \right\}. \notag
\end{align}
By direct calculation, we obtain $C_{t+1} \leq C_t$, if
\begin{gather}
1 - \frac{4 n \lmax \dy }{ \lmin } \tau_t > 0, \label{eq_taut_bound} \\
C_t \geq \kappa_t := \frac{ 64 ( 1 - \tau_t ) n^2 \lmax^2 \dy^2 }{ \lmin^3 \left( 1 - \frac{ 4 n \lmax \dy }{ \lmin } \tau_t \right) }. \notag
\end{gather}

\begin{prop} \label{prop_key_2}
Assume that $C_t > 2 \lmin^{-1} \dB \dX$.
Choose $T$ such that \eqref{eq_taut_bound} holds for $t = T$. 
Then we have
\begin{equation}
C_t \leq \max \set{ C_0 \prod_{i = 0}^T \frac{1}{ 1 - \tau_i }, \frac{ 64 n^2 \lmax^2 \dy^2 }{ \lmin^3 \left( 1 - \frac{ 4 n \lmax \dy }{ \lmin } \tau_T \right) } }. \notag
\end{equation}
\end{prop}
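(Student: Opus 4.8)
The plan is to prove the bound by induction on $t$, splitting the timeline into the initial segment $t \le T$ and the tail $t \ge T$. On the initial segment I would simply invoke Proposition~\ref{prop_simple_bound}: since every factor $1/(1-\tau_i) \ge 1$, the partial products are non-decreasing, so $C_t \le C_0 \prod_{i=0}^{t} (1-\tau_i)^{-1} \le C_0 \prod_{i=0}^{T}(1-\tau_i)^{-1} =: M$ for all $t \le T$, which is the first term in the asserted maximum; in particular $C_T \le M$, giving the base case for the tail. Writing $\beta := 4 n \lmax \dy / \lmin$, $K := 64 n^2 \lmax^2 \dy^2 / (\lmin^3 (1 - \beta \tau_T))$ and $B := \max\set{ M, K }$, the goal of the tail induction is $C_t \le B$ for every $t \ge T$.

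For the tail I first record two monotonicity facts coming from $\tau_t = 2/(t+3)$ being strictly decreasing. First, $1 - \beta \tau_t \ge 1 - \beta \tau_T > 0$ for all $t \ge T$, so the sign condition \eqref{eq_taut_bound} that licenses the one-step decrease persists throughout the tail. Second, and crucially, I would compute $\kappa_t/(1-\tau_t) = 64 n^2 \lmax^2 \dy^2 / (\lmin^3(1 - \beta \tau_t))$, whose denominator is non-decreasing in $t$; hence $\kappa_t/(1-\tau_t)$ is non-increasing on the tail and equals exactly $K$ at $t = T$. With these in hand the inductive step assumes $C_t \le B$ and splits on the size of $C_t$. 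If $C_t \ge \kappa_t$, then—invoking Proposition~\ref{prop_key} under the standing hypothesis $C_t > 2 \dB \dX / \lmin$ together with \eqref{eq_better_bound} and \eqref{eq_taut_bound}—the direct calculation preceding the statement yields $C_{t+1} \le C_t \le B$. If instead $C_t < \kappa_t$, the monotone recursion is unavailable and I fall back on the crude bound \eqref{eq_simple_bound}: $C_{t+1} \le C_t/(1-\tau_t) < \kappa_t/(1-\tau_t) \le K \le B$, where the penultimate inequality is exactly the second monotonicity fact. Either way $C_{t+1} \le B$, closing the induction.

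The main obstacle is the bookkeeping around the two competing thresholds. The monotone decrease $C_{t+1} \le C_t$ requires \emph{simultaneously} that $t \ge T$ (for \eqref{eq_taut_bound}), that $C_t \ge \kappa_t$, and that $C_t > 2\dB\dX/\lmin$ so that Proposition~\ref{prop_key} applies; whenever any of these fails, only the lossy bound $C_{t+1} \le C_t/(1-\tau_t)$ is available, which a priori inflates the iterate. The delicate point—and the reason the argument is not simply ``$C_{t+1}\le C_t$ eventually''—is to verify that this lossy step never escapes the target: below $\kappa_t$ this is guaranteed by the non-increasingness of $\kappa_t/(1-\tau_t)$, which pins its worst case to the value $K$ attained at $t=T$. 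The remaining regime $C_t \le 2\dB\dX/\lmin$, where Proposition~\ref{prop_key} is silent, is precisely what is absorbed by the extra term $2\dB\dX/\lmin$ in the maximum of Lemma~\ref{LEM_KEY}, so that Proposition~\ref{prop_key_2} need only deliver the bound on the slice where $C_t$ exceeds that threshold.
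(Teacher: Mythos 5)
Your argument is correct and follows essentially the same route as the paper's proof: Proposition~\ref{prop_simple_bound} handles $t \le T$, and for $t > T$ the same dichotomy between $C_t \ge \kappa_t$ (monotone decrease via Proposition~\ref{prop_key} and \eqref{eq_better_bound}) and $C_t < \kappa_t$ (one lossy step bounded by $\kappa_t/(1-\tau_t)$, which is non-increasing for $t \ge T$ and equals the second term of the maximum at $t = T$) yields the claim. Your explicit induction is merely a cleaner formalization of the paper's informal ``until some $t^*$'' tracking, and, like the paper, you defer the regime $C_t \le 2 \lmin^{-1} \dB \dX$ to the corresponding extra term in the maximum of Lemma~\ref{LEM_KEY}.
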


\begin{proof}
Since $( \tau_t )_{t \geq 0}$ is a decreasing sequence, the inequality \eqref{eq_taut_bound} holds for all $t \geq T$.

If $t \leq T$, we can apply Proposition \ref{prop_simple_bound}, and obtain
\begin{equation}
C_t \leq C_0 \prod_{i = 0}^t \frac{1}{1 - \tau_i} \leq C_0 \prod_{i = 0}^T \frac{1}{1 - \tau_i}. \notag
\end{equation}

Consider the case when $t > T$. 
Suppose $C_T \geq \kappa_T$. 
We have $C_{t+1} \leq C_t \leq C_T$, which can be bounded using Proposition \ref{prop_simple_bound}, until some $t^*$ such that $C_{t^*} < \kappa_{t^*}$.
But then $C_{t+1} \leq ( 1 - \tau_t )^{-1} \kappa_t$ for all $t \geq t^*$.
If $C_T < \kappa_T$, similarly, we also obtain $C_{t+1} \leq ( 1 - \tau_t )^{-1} \kappa_t$ for all $t \geq T$.
The proposition follows, as 
\begin{equation}
\frac{1}{1 - \tau_t}\kappa_t = \frac{ 64 n^2 \lmax^2 \dy^2 }{ \lmin^3 \left( 1 - \frac{ 4 n \lmax \dy }{ \lmin } \tau_t \right) } \leq \frac{ 64 n^2 \lmax^2 \dy^2 }{ \lmin^3 \left( 1 - \frac{ 4 n \lmax \dy }{ \lmin } \tau_T \right) }. \notag
\end{equation}
\end{proof}

If $C_t \leq 2 \lmin^{-1} \dB \dX$, then this is already a constant upper bound on $C_t$. This completes the proof.

\subsection{Proof of Lemma \ref{LEM_NESTEROV}} \label{sec_nesterov}

We prove by induction.
The claim is obviously correct for $t = 0$.
Suppose the claim holds for some $t \geq 0$.
Then we have
\begin{align}
& \sum_{k = 0}^{t + 1} \alpha_k \left[ g ( X_k ) + \iprod{ \naf ( X_k ) }{ X - X_k } \right] + B_t \notag \\
& \quad \geq S_t g ( X_t ) + \alpha_{k+1} \left[ g ( X_{t+1} ) + \iprod{ \naf ( X_{t+1} ) }{ X - X_{t+1} } \right] \notag \\
& \quad = S_{t+1} g ( X_{t+1} ) + S_t \left[ g ( X_t ) - g ( X_{t+1} ) \right] \notag \\
& \quad \quad \, \, + \iprod{ \naf ( X_{t+1} ) }{ \alpha_{t+1} ( X - X_{t+1} ) }. \notag \\
& \quad \geq S_{t+1} g ( X_{t+1} ) \notag \\
& \quad \quad \, \, + \iprod{ \naf ( X_{t+1} ) }{ \alpha_{t+1} ( X - X_{t+1} ) + S_t ( X_t - X_{t+1} ) } \notag \\
& \quad = S_{t+1} g ( X_{t+1} ) + \alpha_{t+1} \iprod{ \naf ( X_{t+1} ) }{ X - V_t } \notag \\
& \quad \geq S_{t+1} g ( X_{t+1} ) + \alpha_{t+1} \iprod{ \naf ( X_{t+1} ) - \naf ( X_t ) }{ X - V_t }, \notag
\end{align}
where the second inequality is due to convexity of $g$, and the third inequality is due to the fact that
\begin{equation}
\iprod{ \naf ( X_t ) }{ X - V_t } \geq 0 \notag
\end{equation}
for any $X \in \mathcal{C}$, as $V_t$ minimizes $\iprod{ \naf ( X_t ) }{ \cdot }$ on $\mathcal{C}$.

To complete the proof, we need to show that
\begin{equation}
\alpha_{t+1} \iprod{ \naf ( X_{t+1} ) - \naf ( X_t ) }{ X - V_t } \geq B_t - B_{t+1} = - \frac{\alpha_{t+1}^2}{S_t} \gamma, \notag
\end{equation}
or 
\begin{equation}
\iprod{ \naf ( X_{t+1} ) - \naf ( X_t ) }{ X - V_t } \geq - \frac{\alpha_{t+1}}{S_t} \gamma .
\end{equation}
By H\"{o}lder's inequality, we have
\begin{align}
& \abs{ \iprod{ \naf ( X_{t+1} ) - \naf ( X_t ) }{ X - V_t } } \notag \\
& \quad \leq \norm{ \naf ( X_{t+1} ) - \naf ( X_t ) } \norm{ X - V_t }_* \notag \\
& \quad \leq \norm{ \naf ( X_{t+1} ) - \naf ( X_t ) } \dX , \notag
\end{align}
where $\norm{ \cdot }$ denotes the spectral norm.

Now we bound the quantity $\norm{ \naf ( X_{t+1} ) - \naf ( X_t ) }$. 
By direct calculation, we obtain
\begin{align}
& \norm{ \naf ( X_{t+1} ) - \naf (X_t) } \notag \\
& \quad = \norm{ \sum_{i = 1}^n \frac{ \eta_i \iprod{ X_t - X_{t+1} }{ \Phi_i } }{ \iprod{ X_{t} }{ \Phi_i } \iprod{ X_{t+1} }{ \Phi_i } } \Phi_i } \notag \\
& \quad \leq \dA \sum_{i = 1}^n \frac{ \eta_i \abs{ \iprod{ X_t - X_{t+1} }{ \Phi_i } } }{ \iprod{ X_{t} }{ \Phi_i } \iprod{ X_{t+1} }{ \Phi_i } } \notag \\
& \quad = \tau_t \dA \sum_{i = 1}^n \frac{ \eta_i \abs{ \iprod{ X_t - V_t }{ \Phi_i } } }{ \iprod{ X_{t} }{ \Phi_i } \iprod{ X_{t+1} }{ \Phi_i } } \notag \\
& \quad \leq \tau_t \dA \sum_{i = 1}^n \frac{ \eta_i \norm{ X_t - V_t }_* \norm{ \Phi_i } }{ \iprod{ X_t }{ \Phi_i } \iprod{ X_{t+1} }{ \Phi_i } } \notag \\
& \quad \leq \tau_t \dA^2 \dX \sum_{i = 1}^n \frac{ \eta_i }{ \iprod{ X_t }{ \Phi_i } \iprod{ X_{t+1} }{ \Phi_i } }. \notag
\end{align}
Since either $\eta_i = 0$ or $\eta_i \geq 1$, we have
\begin{align}
& \norm{ \naf ( X_{t+1} ) - \naf (X_t) } \notag \\
& \quad \leq \tau_t \dA^2 \dX \sum_{i = 1}^n \frac{ \eta_i^2 }{ \iprod{ X_t }{ \Phi_i } \iprod{ X_{t+1} }{ \Phi_i } } \notag \\
& \quad \leq \frac{ \tau_t \dA^2 \dX }{ 1 - \tau_t } \sum_{i = 1}^n \left( \frac{ \eta_i }{ \iprod{ X_t }{ \Phi_i } } \right)^2 \notag \\
& \quad \leq \frac{ \tau_t }{ 1 - \tau_t } \dA^2 \dX \left( \sum_{i = 1}^n \frac{ \eta_i }{ \iprod{ X_t }{ \Phi_i } } \right)^2 \notag \\
& \quad \leq \frac{ \alpha_{t+1} }{ S_t } \dA^2 \dX \left( \sum_{i = 1}^n \frac{ \eta_i }{ \iprod{ X_t }{ \Phi_i } } \right)^2. \notag
\end{align}
By Lemma \ref{LEM_KEY}, 
\begin{equation}
\norm{ \naf ( X_{t+1} ) - \naf (X_t) } \leq \frac{ \alpha_{t+1} }{ S_t } \dA^2 \dX C^2. \notag
\end{equation}
Hence it suffices to choose $\gamma \geq C^2 \dA^2 \dX^2$.

\subsection{Proof of Proposition \ref{PROP_FINAL}} \label{sec_final}

By H\"{o}lder's inequality, the first term in the definition of $B_t$ can be bounded above by $\norm{ \naf ( X_0 ) } \dX$.
The second term can be bounded as
\begin{align}
\left( \sum_{k = 1}^t \frac{ \alpha_k^2 }{ S_{k - 1} } \right) \gamma = \gamma \sum_{k = 1}^t \left( 2 + \frac{2}{k} \right) \leq 4 t \gamma. \notag
\end{align}
Then we obtain
\begin{align}
\frac{B_t}{S_t} & \leq \frac{ 8 t \gamma }{ ( t + 1 ) ( t + 2 ) } + \frac{ 2 \dX \norm{ \naf ( X_0 ) } }{ ( t + 1 ) ( t + 2 ) } \notag \\
& < \frac{ 8 \gamma }{ t + 2 } + \frac{ 2 \dX \norm{ \naf ( X_0 ) } }{ ( t + 1 ) ( t + 2 ) } \notag \\
& \leq \frac{ 8 C^2 \dA^2 \dX^2 }{ t + 2 } + \frac{ 2 \dX \norm{ \naf ( X_0 ) } }{ ( t + 1 ) ( t + 2 ) }. \notag
\end{align}

The definition of $C$ in Lemma \ref{LEM_KEY} also involves $\tau_t$. 
We notice that choosing $T = 8 n ( \lmax / \lmin ) \dy - 1$ suffices to ensure $1 - 4 n ( \lmax / \lmin ) \dy \tau_T \geq 0$.
Then we obtain
\begin{align}
\prod_{k = 0}^T \frac{1}{1 - \tau_k} & = \frac{ ( T + 2 ) ( T + 3 ) }{ 2 } \notag  \\
& < \frac{ ( T + 3 )^2 }{2} = 2 \left( \frac{ 4 n \lmax \dy }{\lmin} + 1 \right)^2. \notag
\end{align}
The quantity $C_0$ can be easily bounded as $C_0 \leq n \lmin^{-1} \dy$.
Finally, we have
\begin{equation}
\frac{ 64 n^2 \lmax^2 \dy^2 }{ \lmin^3 \left( 1 - \frac{ 4 n \lmax \dy }{ \lmin } \tau_T \right) } = \frac{ 64 n^2 \lmax^2 \dy^2 }{ \lmin^3 } \left( \frac{ 4 n \lmax \dy }{ \lmin } + 1 \right). \notag
\end{equation}

\bibliographystyle{IEEEtranS}
\bibliography{list}

\begin{thebibliography}{10}
\providecommand{\url}[1]{#1}
\csname url@samestyle\endcsname
\providecommand{\newblock}{\relax}
\providecommand{\bibinfo}[2]{#2}
\providecommand{\BIBentrySTDinterwordspacing}{\spaceskip=0pt\relax}
\providecommand{\BIBentryALTinterwordstretchfactor}{4}
\providecommand{\BIBentryALTinterwordspacing}{\spaceskip=\fontdimen2\font plus
\BIBentryALTinterwordstretchfactor\fontdimen3\font minus
  \fontdimen4\font\relax}
\providecommand{\BIBforeignlanguage}[2]{{%
\expandafter\ifx\csname l@#1\endcsname\relax
\typeout{** WARNING: IEEEtranS.bst: No hyphenation pattern has been}%
\typeout{** loaded for the language `#1'. Using the pattern for}%
\typeout{** the default language instead.}%
\else
\language=\csname l@#1\endcsname
\fi
#2}}
\providecommand{\BIBdecl}{\relax}
\BIBdecl

\bibitem{Auslender2006}
A.~Auslender and M.~Teboulle, ``Interior gradient and proximal methods for
  convex and conic optimization,'' \emph{SIAM J. Optim.}, vol.~16, no.~3, pp.
  697--725, 2006.

\bibitem{Beck2009}
A.~Beck and M.~Teboulle, ``A fast iterative shrinkage-thresholding algorithm
  for linear inverse problems,'' \emph{SIAM J. Imaging Sci.}, vol.~2, no.~1,
  pp. 183--202, 2009.

\bibitem{Becker2011}
S.~R. Becker, E.~J. Cand\`{e}s, and M.~C. Grant, ``Templates for convex cone
  problems with applications to sparse signal recovery,'' \emph{Math. Prog.
  Comp.}, vol.~3, pp. 165--218, 2011.

\bibitem{Bobkov1998}
S.~G. Bobkov and M.~Ledoux, ``On modified logarithmic {S}obolev inequalities
  for {B}ernoulli and {P}oisson meausres,'' \emph{J. Funct. Anal.}, vol. 156,
  pp. 347--365, 1998.

\bibitem{Brand2006}
M.~Brand, ``Fast low-rank modifications of the thin singular value
  decomposition,'' \emph{Linear Algebra Appl.}, vol. 415, no.~1, pp. 20--30,
  2006.

\bibitem{Candes2008}
E.~J. Cand\`{e}s, ``The restricted isometry property and its implications for
  compressed sensing,'' \emph{C. R. Acad. Sci. Paris, Ser. I}, vol. 346, pp.
  589--592, 2008.

\bibitem{Candes2015}
E.~J. Cand{\`{e}}s, Y.~C. Eldar, T.~Strohmer, and V.~Voroninski, ``Phase
  retrieval via matrix completion,'' \emph{SIAM Rev.}, vol.~57, no.~2, pp.
  225--251, 2015.

\bibitem{Candes2015a}
E.~J. Cand{\`{e}}s, X.~Li, and M.~Soltanolkotabi, ``Phase retrieval from coded
  diffraction patterns,'' \emph{Appl. Comput. Harmon. Anal.}, vol.~39, pp.
  277--299, 2015.

\bibitem{Candes2015b}
------, ``Phase retrieval via {W}irtinger flow: Theory and algorithms,''
  \emph{IEEE Trans. Inf. Theory}, vol.~61, no.~4, pp. 1985--2007, 2015.

\bibitem{Candes2011b}
E.~J. Cand{\`{e}}s and Y.~Plan, ``Tight oracle inequalities for low-rank matrix
  recovery from a minimal number of noisy random measurements,'' \emph{IEEE
  Trans. Inf. Theory}, vol.~57, no.~4, pp. 2342--2359, 2011.

\bibitem{Candes2013}
E.~J. Cand{\`{e}}s, T.~Strohmer, and V.~Voroninski, ``{P}hase{L}ift: Exact and
  stable signal recovery from magnitude measurements via convex programming,''
  \emph{Commun. Pure Appl. Math.}, vol. LXVI, pp. 1241--1274, 2013.

\bibitem{Combettes2005}
P.~L. Combettes and V.~R. Wajs, ``Signal recovery by proximal forward-backward
  splitting,'' \emph{Multiscale Model. Simul.}, vol.~4, no.~4, pp. 1168--1200,
  2005.

\bibitem{Davenport2014}
M.~A. Davenport, Y.~Plan, E.~van~den Berg, and M.~Wootters, ``1-bit matrix
  completion,'' \emph{Inf. Inference}, vol.~3, pp. 189--223, 2014.

\bibitem{Eckstein2015}
J.~Eckstein and W.~Yao, ``Understanding the convergence of the alternating
  direction method of multipliers: Theoretical and computational
  perspectives,'' 2015.

\bibitem{Fienup1982}
J.~R. Fienup, ``Phase retrieval algorithms: a comparison,'' \emph{Appl. Opt.},
  vol.~21, no.~15, pp. 2758--2769, 1982.

\bibitem{Foucart2013}
S.~Foucart and H.~Rauhut, \emph{A mathematical introduction to compressive
  sensing}.\hskip 1em plus 0.5em minus 0.4em\relax Basel: Birkh\"{a}user, 2013.

\bibitem{Freund2014}
R.~M. Freund and P.~Grigas, ``New analysis and results for the {F}rank-{W}olfe
  method,'' \emph{Math. Program., Ser. A}, 2014.

\bibitem{Garber2015}
D.~Garber and E.~Hazan, ``Faster rates for the {F}rank-{W}olfe method over
  strongly-convex sets,'' in \emph{Proc. 32nd Int. Conf. Machine Learning},
  2015.

\bibitem{Harchaoui2014}
Z.~Harchaoui, A.~Juditsky, and A.~Nemirovski, ``Conditional gradient algorithms
  for norm-regularized smooth convex optimization,'' \emph{Math. Program., Ser.
  A}, vol. 152, no.~1, pp. 75--112, 2015.

\bibitem{Hradil1997}
Z.~Hradil, ``Quantum-state estimation,'' \emph{Phys. Rev. A}, vol.~55, no.~3,
  1997.

\bibitem{Jaggi2013}
M.~Jaggi, ``Revisiting {F}rank-{W}olfe: Projection-free sparse convex
  optimization,'' in \emph{Proc. 30th Int. Conf. Machine Learning}, 2013.

\bibitem{Kontoyiannis2006}
I.~Kontoyiannis and M.~Madiman, ``Measure concentration for compound {P}oisson
  distributions,'' \emph{Electron. Commun. Probab.}, vol.~11, pp. 45--57, 2006.

\bibitem{Nesterov2013}
Y.~Nesterov, ``Gradient methods for minimizing composite functions,''
  \emph{Math. Program., Ser. B}, vol. 140, pp. 125--161, 2013.

\bibitem{Nesterov2015}
------, ``Complexity bounds for primal-dual methods minimizing the model of
  objective function,'' Center for Operations Research and Econometrics, {CORE}
  Discussion Paper, 2015.

\bibitem{Nesterov2013a}
Y.~Nesterov and A.~Nemirovski, ``On first-order algorithms for $\ell_1$/nuclear
  norm minimization,'' \emph{Acta Numer.}, pp. 509--575, 2013.

\bibitem{Netrapalli2013}
P.~Netrapalli, P.~Jain, and S.~Sanghavi, ``Phase retrieval using alternating
  minimization,'' in \emph{Adv. Neural Information Processing Systems 26},
  2013.

\bibitem{Recht2010}
B.~Recht, M.~Fazel, and P.~A. Parrilo, ``Guaranteed minimum-rank solutions of
  linear matrix equations via nuclear norm minimization,'' \emph{SIAM Rev.},
  vol.~52, no.~3, pp. 471--501, 2010.

\bibitem{Rudelson2008}
M.~Rudelson and R.~Vershynin, ``On sparse reconstruction from {F}ourier and
  {G}aussian measurements,'' \emph{Commun. Pure Appl. Math.}, vol. LXI, pp.
  1025--1045, 2008.

\bibitem{Shechtman2015}
Y.~Shechtman, Y.~C. Eldar, O.~Cohen, H.~N. Chapman, J.~Miao, and M.~Segev,
  ``Phase retrieval with application to optical imaging,'' \emph{IEEE Sig.
  Process. Mag.}, vol.~32, no.~3, pp. 87--109, 2015.

\bibitem{Tran-Dinh2015b}
Q.~Tran-Dinh, A.~Kyrillidis, and V.~Cevher, ``Composite self-concordant
  minimization,'' \emph{J. Mach. Learn. Res.}, vol.~16, pp. 371--416, 2015.

\bibitem{Yurtsever2015b}
A.~Yurtsever, Y.-P. Hsieh, and V.~Cevher, ``Scalable convex methods for phase
  retrieval,'' in \emph{6th IEEE Int. Workshop Computational Advances in
  Multi-Sensor Adaptive Processing}, 2015.

\bibitem{Yurtsever2015}
A.~Yurtsever, Q.~Tran-Dinh, and V.~Cevher, ``A universal primal-dual convex
  optimization framework,'' in \emph{29th Ann. Conf. Neural Information
  Processing Systems}, 2015.

\end{thebibliography}

\end{document}